\documentclass[12pt,reqno]{amsart}
\usepackage{color}
\usepackage{comment}
\usepackage{lineno}

\def\R{\mathbb{R}}

\def\calF{\mathcal{F}}

\def\e{\varepsilon}

\def\hv{\hat{v}}

\theoremstyle{plain}
	\newtheorem{theorem}{Theorem}[section]
	\newtheorem{lemma}[theorem]{Lemma}
	\newtheorem{corollary}[theorem]{Corollary}
	
	\newtheorem{proposition}[theorem]{Proposition}
	\newtheorem{remark}[theorem]{Remark}

	\newtheorem{example}[theorem]{Example}
\theoremstyle{plain}

\newcommand{\di}{\mathop{}\!\mathrm{d}}

\makeatletter
 \@addtoreset{equation}{section}
\makeatother

\begin{document}
\title[Borderline dimension $N=10$]{Monotonicity of the bifurcation curve\\ for supercritical elliptic problems\\ in the borderline dimension $N=10$}

\author{Kenta Kumagai}
\thanks{ORCiD of KK is 0000-0001-6907-6765.}
\thanks{KK was supported by JSPS KAKENHI Grant Number 26KJ0083, as part of the JSPS Research Fellowship for Young Scientists.}
\address{Graduate School of Mathematical Sciences, The University of Tokyo,
 3-8-1 Komaba, Meguro-ku, Tokyo 153-8914, Japan
}
\email{kumagai-kenta@g.ecc.u-tokyo.ac.jp}

\author{Yasuhito Miyamoto}
\thanks{ORCiD of YM is 0000-0002-7766-1849}
\thanks{YM was supported by JSPS KAKENHI Grant Numbers 24K00530, 25H00591.}
\address{Graduate School of Mathematical Sciences, The University of Tokyo,
 3-8-1 Komaba, Meguro-ku, Tokyo 153-8914, Japan
}
\email{miyamoto@ms.u-tokyo.ac.jp}

\begin{abstract}
We study the global structure of bifurcation diagrams for semilinear elliptic Dirichlet problems with supercritical nonlinearities in the unit ball. 
In particular, we focus on the borderline dimension $N = 10$, where the qualitative behavior of the bifurcation diagram is not determined solely by the growth rate of the nonlinearity. 
We show that the bifurcation curve is monotone, yielding uniqueness of classical solutions, for a class of nonlinearities including $f(u) = \exp((u+1)^p)$ with $p > 1$ and iterated exponential functions. 
Our approach is based on the construction of suitable singular subsolutions that satisfy a Hardy-type stability condition, avoiding the need for explicit representations of singular solutions. 
As a consequence, we show that, in dimension $N = 10$, these nonlinearities exhibit the same qualitative bifurcation diagram as the classical Gel'fand problem. 
We also characterize the monotonicity of the bifurcation curve in terms of the existence of global-in-time unbounded solutions to the associated parabolic problem. 
\end{abstract}

\date{\today}
\subjclass[2020]{Primary: 35B32, 35B35, secondary 35B44,35J61.}
\maketitle


\section{\bf Introduction and main theorems}
Let $B\subset\R^N$, $N\ge 3$, be the unit ball.
We consider the bifurcation problem
\begin{equation}\label{S1E1}
\begin{cases}
\Delta u+\lambda f(u)=0 & \text{for}\ x\in B,\\
u=0 & \text{for}\ x\in \partial B,\\
u>0 & \text{for}\ x\in B,\\
\end{cases}
\end{equation}
where $\lambda>0$ is a bifurcation parameter and $f$ exhibits supercritical growth.
Throughout the present paper, we assume that $f\in C^2[0,\infty)$ satisfies
\begin{equation}\label{f}
f(u)>0,\ f'(u)>0,\ f''(u)>0\quad\text{for}\ u\ge 0.
\end{equation}
By the symmetry result of Gidas-Ni-Nirenberg \cite{GNN79}, all positive classical solutions are radial and decreasing.
The solution set forms a curve that can be parametrized by the $L^\infty$-norm, that is, $\lambda = \lambda(\alpha)$ with $\alpha = \left\|u\right\|_{L^\infty(B)}$, and that starts from $\lambda(0)=0$.
A central object in the analysis is the possible presence of a singular solution $u^*$, satisfying $u^*(r) \to \infty$ as $r \to 0$.
The asymptotic behavior of the solution curve $\lambda(\alpha)$ as $\alpha \to \infty$ leads to two fundamentally different types of bifurcation diagrams:
\begin{itemize}
\item {\bf Type I}: 
The curve $\lambda(\alpha)$ converges to a finite limit $\lambda^*$ while oscillating around $\lambda^*$, yielding infinitely many solutions at $\lambda = \lambda^*$.
A singular solution $u^*$ exists for $\lambda=\lambda^*$.
\item {\bf Type II}:
The curve $\lambda(\alpha)$ increases monotonically toward $\lambda^*$, yielding uniqueness of classical solutions for $0 < \lambda < \lambda^*$.
A singular solution $u^*$ exists for $\lambda=\lambda^*$.
\end{itemize}
The complete bifurcation diagrams were studied in \cite{G63,JL72} for two canonical nonlinearities $f(u)=e^u$, $(u+1)^p$.
In the case $f(u)=e^u$, the bifurcation diagram is of Type I for $2<N<10$, while it is of Type II for $N\ge 10$.
Gel'fand \cite{G63} established the Type I bifurcation for $N=3$, and Joseph-Lundgren \cite{JL72} extended the analysis to all $N\ge 3$.
In the case $f(u)=(u+1)^p$ two important exponents play a crucial role:
\begin{equation}\label{qS}
p_S=\frac{N+2}{N-2}\quad\text{for}\ N>2,\qquad
p_{JL}=
\begin{cases}
1+\frac{4}{N-4-2\sqrt{N-1}} & \text{for}\ N>10,\\
\infty & \text{for}\ 2<N\le 10.
\end{cases}
\end{equation}
The first one is the critical Sobolev exponent and the second one is called the Joseph-Lundgren exponent.
In the case $f(u)=(u+1)^p$, the bifurcation curve is of Type I for $p_S<p<p_{JL}$, while it is of Type II for $p\ge p_{JL}$.
In particular, if $2<N\le 10$, the bifurcation diagram is always of Type I for $p>p_S$.
In these two cases, special transformations reduce the problem to planar autonomous systems, and phase-plane analysis is effective.

We now turn to more general nonlinearities.
Considerable attention has been paid to the equation $\Delta u+\lambda u+u^p=0$, $p>p_S$ and the problem was studied in \cite{B89,BN87,DF07,GW11,MP91}.
Since this equation can be transformed into the equation $\Delta u+\lambda (u+u^p)=0$ by scaling, it can be reduced to Problem~\eqref{S1E1}.
For other bifurcation problems for supercritical nonlinearities, see \cite{D00,D08,D13,KW18}.
Since such special transformations are no longer available, a different analytical approach is required.
After those studies, the second author of the present paper started to classify all the possible bifurcation diagrams for the cases $f(u)=u^p+o(u^p)$, $p>p_S$,  ($u\to\infty$) and $f(u)=e^u+o(e^u)$ ($u\to\infty$) in \cite{M14,M15}.
Combining results obtained in \cite{M14,M15,MN18,MN20}, we summarize the known results as follows:
\begin{itemize}
\item $f(u)=e^u+o(e^u)$, the bifurcation diagram is of Type I for $2<N<10$.
In \cite{M15} examples of nonlinearities exhibiting Type II bifurcation were given if $N\ge 10$.
\item $f(u)=u^p+o(u^p)$, the bifurcation diagram is of Type I for $p_S<p<p_{JL}$.
In \cite{MN18} examples of nonlinearities exhibiting Type II bifurcation were given if $p\ge p_{JL}$ and $N>10$.
\end{itemize}
The uniqueness of the singular solution $(\lambda^*,u^*)$, as well as the convergence of $u$ to $u^*$ as $\alpha\to\infty$, are important, and these properties were established in \cite{MN20} for the above two classes.
The case $N=10$ appears to be critical (i.e., borderline).
Moreover, these results suggest that Type I bifurcation is governed primarily by the growth rate of $f$, whereas Type II behavior depends on more delicate global properties of $f$.
Indeed, in the case where $f(u)=(u+\e)+(u+\e)^p$, $p>p_{JL}$, the bifurcation diagram has finitely many turning points, and in particular, at least one.\\

A classification of bifurcation diagrams in the case of a broader class of nonlinearities was initiated in \cite{M18}.
We consider the following model nonlinearities, which arise naturally and are treated in \cite{GG20,KW18} as higher-order analogues of the classical Gel'fand problem:
\begin{equation}\label{NMC}
f(u)=\exp({(u+1)^p})\ (p>1),\qquad f(u)=\underbrace{\exp(\cdots \exp(u)\cdots)}_{\text{$n\ge 2$ times}}.
\end{equation}
We introduce the following $q$-exponent:
\begin{equation}\label{q}
q_f=\lim_{u\to\infty}\frac{f'(u)^2}{f(u)f''(u)}.
\end{equation}
It is known that $q_f\ge 1$, provided the limit exists.
See \cite{FI18,M18}.
We define the growth rate of $f$ by
$$
p_f=\lim_{u\to\infty}\frac{uf'(u)}{f(u)}.
$$
By L'Hospital's rule, we have
$$
\frac{1}{p_f}=\lim_{u\to\infty}\frac{f(u)/f'(u)}{u}
=\lim_{u\to\infty}\left(1-\frac{f(u)f''(u)}{f'(u)^2}\right)
=1-\frac{1}{q_f},
$$
and hence $1/p_f+1/q_f=1$.
Thus, $q_f$ is the conjugate exponent of the growth rate of $p_f$.
The $q$-exponent defined by \eqref{q} was introduced in \cite{DF10}.

It is easy to verify that $q_f=p/(p-1)$ if $f(u)=u^p$ with $p>1$, and that $q_f=1$ if $f(u)=e^u$.
For example, if $f(u)=u^p(\log(u + e))^{\gamma}$ with $p>1$ and $\gamma\in\R$, then $q_f=p/(p-1)$.
Hence, even if the principal term of $f$ is not $u^p$, the $q$-exponent is determined and $q_f=p/(p-1)$.
The case $q=1$ corresponds to a super-power case.
It was shown by \cite{FI18,M18} that the case $q=1$ includes two model cases \eqref{NMC}.
As stated above, the $q$-exponent satisfies $q_f\ge 1$.
The power case $q_f>1$ and super-power case $q_f=1$ can be treated in a unified way.
We define the conjugate exponents of $p_S$ and $p_{JL}$ as follows:
$$
q_S=\frac{N+2}{4},\qquad q_{JL}=\frac{N-2\sqrt{N-1}}{4}.
$$
Both $q_S$ and $q_{JL}$ admit formal definitions for all $N\ge 1$.
However, $q_S>1$ if $N>2$, and $q_{JL}>1$ if $N>10$.
Hence, these conditions on $N$ coincide with those of \eqref{qS}.

When \eqref{q} exists and $q_f<q_S$, the uniqueness of the radial singular solution and the convergence $\lambda(\alpha)\to\lambda^*$ were proved in \cite{MN23}.
Therefore, the number of the turning points is important for classification.
In \cite{M18} a sufficient condition, which can be determined solely by the $q$-exponent, for a broader class of nonlinearities was obtained.
\begin{proposition}\label{S1P2}
Assume that $f\in C^2[0,\infty)$ satisfies \eqref{f} and that the $q$-exponent defined by \eqref{q} exists.
If $q_{JL}<q_f<q_S$, then the bifurcation diagram of \eqref{S1E1} is of Type I.
\end{proposition}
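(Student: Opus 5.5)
The strategy is to show that the curve $\lambda(\alpha)$ crosses $\lambda^*$ infinitely often. We do this by comparing the regular solutions with the singular solution $u^*$ through the number of their intersections. The limit is already available: since $q_f<q_S$, \cite{MN23} gives the existence and uniqueness of the radial singular solution $(\lambda^*,u^*)$ and the convergence $\lambda(\alpha)\to\lambda^*$ as $\alpha\to\infty$. It remains to prove that $\lambda(\alpha)-\lambda^*$ changes sign infinitely many times. The classical route, following the intersection-number arguments for $e^u$ and $u^p$, has two ingredients:
\begin{enumerate}
\item Let $\mathcal{Z}(\alpha)$ be the number of zeros in $(0,1)$ of the difference between the regular solution with $u(0)=\alpha$ (rescaled to the same $\lambda$) and $u^*$. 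Then $\mathcal{Z}(\alpha)\to\infty$ as $\alpha\to\infty$.
\item Whenever $\lambda(\alpha)=\lambda^*$, this zero number controls the turning behaviour. It can only change when an intersection point passes through $r=1$, and such a passage happens exactly at a crossing of $\lambda^*$.
\end{enumerate}
Together these force infinitely many crossings of $\lambda^*$, hence infinitely many turning points, which is the Type I diagram.

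To obtain (1), we use the change of variables adapted to the $q$-exponent, as in \cite{M18,MN23}. Set
\[
F(u)=\int_u^\infty\frac{ds}{f(s)},
\]
and rewrite the equation for the radial solution in terms of the variable $F(u)$ and $t=\log r$ (or $s=-\log r$), after scaling out $\lambda$. Because $f'(u)^2/(f f'')\to q_f$, the resulting nonautonomous system is asymptotically autonomous as $\alpha\to\infty$ (equivalently $r\to0$). Its limiting equation is the one satisfied by the model $u^p$ with $p=q_f/(q_f-1)$, or by $e^u$ when $q_f=1$. In that limit the singular solution corresponds to an equilibrium. The regular solutions, after blow-up rescaling around $r=0$, converge to the unique regular entire solution of the limit problem, which approaches the equilibrium as $s\to\infty$.

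The linearization at that equilibrium has characteristic roots of the form
\[
\mu=\frac{-(N-2)\pm\sqrt{(N-2)^2-8(N-2)\,\text{(const depending on } q_f)}}{2}.
\]
The condition $q_{JL}<q_f<q_S$ is exactly the one making the discriminant negative, so the roots are complex. Hence the entire limit solution oscillates around the singular one infinitely many times. By the convergence result of \cite{MN23} and continuous dependence on compact $s$-intervals, the number of intersections of $u(\cdot,\alpha)$ with $u^*$ in $(0,1)$ then tends to infinity.

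For (2), we invoke the standard argument on the zero number of $w=u_\alpha$, or equivalently of $u-u^*$ at $\lambda=\lambda^*$. The zero number of solutions of linear second-order radial ODEs varies only at boundary degeneracies. If $\lambda(\alpha)$ were eventually monotone, say $\lambda(\alpha)<\lambda^*$ for all large $\alpha$, then comparing $u(\cdot,\alpha)$ with $u^*$ via Sturm's theorem would bound the intersection number uniformly, contradicting (1). So the curve oscillates around $\lambda^*$.

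The main obstacle is (1): making the convergence to the limit autonomous system uniform enough to transfer infinitely many oscillations. The approach of the error terms is only asymptotic, since $q_f$ is defined as a limit, with no rate. We would handle this by counting only finitely many oscillations at a time, using compact intervals in $s$ where the convergence is uniform, which suffices to make $\mathcal{Z}(\alpha)$ unbounded.
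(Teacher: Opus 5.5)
The paper gives no proof of its own here: the proposition is quoted from \cite{M18}, with existence and uniqueness of $(\lambda^*,u^*)$ and $\lambda(\alpha)\to\lambda^*$ supplied by \cite{MN23}. Your outline is essentially the argument of that reference: the $F$-based change of variables giving the limit equation $\Delta U+U^{p}=0$ or $\Delta U+e^{U}=0$, infinitely many intersections of the regular limit solution with the singular one, transferred to $u(\cdot,\alpha)-u^*$ on compact intervals, and then the zero-number argument at $r=1$. Two small corrections. First, the discriminant $(N-2)^2-8q_f(N-2q_f)$ is negative on the larger range $q_{JL}<q_f<\frac{N+2\sqrt{N-1}}{4}$; the bound $q_f<q_S$ is needed instead so that the singular solution exists and the regular limit solution converges to it. Second, step (2) rests not on Sturm comparison but on simplicity of the zeros of $u(\cdot,\alpha)-u^*$: they move continuously in $\alpha$, cannot enter at $r=0$, and can leave $(0,1)$ only through $r=1$, i.e.\ when $\lambda(\alpha)=\lambda^*$. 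The correct contrapositive is therefore ``eventually on one side of $\lambda^*$'', not ``eventually monotone''.
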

If $q_f>1$, then the assumption $q_{JL}<q_f<q_S$ is equivalent to $p_S<p_f<p_{JL}$, and if $q_f=1$, then it is equivalent to $2<N<10$.
Therefore, Proposition~\ref{S1P2} recovers Type I bifurcation results for the cases $f(u)=e^u+o(e^u)$, $u^p+o(u^p)$, and reveals that Type I bifurcation occurs for the two nonlinearities \eqref{NMC} if $2<N<10$.\\

We now study Type II bifurcations for a wider class of nonlinearities.
We describe the connection with singular solutions.
Brezis-Vazquez~\cite{BV97} provided a sharp characterization of Type II bifurcation.
Specifically, they showed that Type II bifurcation is equivalent to the stability of the singular solution, thereby revealing that the singular solution plays a central role in the overall structure of solutions.
Although their result applies to a general setting, we state the following version adapted to our setting.
\begin{proposition}\label{S1P1}
Let $N\ge 3$, and let $f\in C^2[0,\infty)$ be a function such that \eqref{f} holds.
Assume that a singular solution $(\lambda^*,u^*)$ of \eqref{S1E1} exists and that $u^*\in H^1_0(B)$.
Then, the bifurcation diagram is of Type~II if and only if
\begin{equation}\label{S1P1E1}
\int_B\left(|\nabla\varphi|^2-\lambda^*f'(u^*)\varphi^2\right)\di x\ge 0
\quad\text{for all } \varphi\in C_0^1(B).
\end{equation}
\end{proposition}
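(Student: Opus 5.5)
The plan is to adapt the Brezis--V\'azquez argument to the radial setting, using the parametrization $\lambda=\lambda(\alpha)$ of the solution curve. Two facts drive it. First, since $f$ is convex, $u^*\in H^1_0(B)$ is an extremal solution exactly when it is stable; in that case $\lambda^*$ is the maximal parameter and $u^*$ is the increasing limit of the minimal solutions $u_\lambda$ as $\lambda\uparrow\lambda^*$. Second, the minimal branch consists of the stable classical solutions, and each of them has a nonnegative first eigenvalue of the linearized operator $-\Delta-\lambda f'(u_\lambda)$.

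\emph{Type II implies \eqref{S1P1E1}.} Assume $\lambda(\alpha)$ increases to $\lambda^*$. Then for each $\lambda<\lambda^*$ there is exactly one classical solution, and it must be the minimal solution $u_\lambda$. Minimal solutions are stable, that is,
\[
\int_B |\nabla\varphi|^2 \ge \lambda\int_B f'(u_\lambda)\varphi^2 .
\]
Monotonicity of the curve, together with the comparison principle, gives $u_\lambda\uparrow u$ pointwise as $\lambda\uparrow\lambda^*$. Here $u$ is a solution at $\lambda^*$ which is unbounded, since $\alpha\to\infty$. Using the uniqueness of the singular solution at $\lambda^*$, or the convergence $u(\alpha)\to u^*$ from \cite{MN20,MN23}, we identify $u=u^*$. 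Fatou's lemma then passes the stability inequality to the limit, since $f'$ is increasing and continuous, and this yields \eqref{S1P1E1}.

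\emph{\eqref{S1P1E1} implies Type II.} Assume $u^*\in H^1_0$ satisfies \eqref{S1P1E1}.
\begin{enumerate}
\item We show that no classical solution exists for $\lambda\ge\lambda^*$. If $v$ were a bounded solution at $\mu\ge\lambda^*$, then $u^*$ would be a supersolution at $\lambda^*$. We compare $v$ with $u^*$ through convexity. Setting $w=u^*-v$, we have
\[
-\Delta w=\lambda^* f(u^*)-\mu f(v)\le \lambda^*\bigl(f(u^*)-f(v)\bigr).
\]
We test against $w^-$ and, more carefully, against $w$ itself. Together with the stability inequality (Hardy-type) and strict convexity, this forces $v=u^*$, a contradiction because $v$ is bounded. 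This is the standard argument of Brezis--V\'azquez, Lemma~3.
\item For each $\lambda<\lambda^*$ we show that the solution is unique. Any classical solution $u$ at $\lambda$ satisfies $u\le u^*$. One route is that the set of $\lambda$ for which $u$ stays below $u^*$ is open and closed, using the fact that radial solutions intersect $u^*$ in a controlled way. Another route is to use $u^*$ as a supersolution to get the minimal solution below it, and then show that any second solution $\tilde u$ must also lie below $u^*$. With $u\le u^*$ in hand, we have $f'(u)\le f'(u^*)$ with strict inequality on a set of positive measure. The stability of $u^*$ then gives that the first eigenvalue of $-\Delta-\lambda f'(u)$ is strictly positive. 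Hence every solution is nondegenerate and stable, so $\lambda'(\alpha)\neq 0$ along the curve and the curve has no turning points.
\item Since $\lambda(0)=0$, $\lambda(\alpha)$ is therefore strictly increasing. It stays below $\lambda^*$ by step 1 and converges to $\lambda^*$.
\end{enumerate}

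The main obstacle is step 2, namely showing that every classical solution, not just the minimal one, lies below $u^*$. The first thing to try is the Brezis--V\'azquez comparison: for two solutions $u_1<u_2$ at the same $\lambda$, convexity combined with stability of the smaller one gives a contradiction. This reduces the claim to the ordering of the radial curve $\{u(\alpha)\}$, which follows from the shooting structure ($u(\alpha)$ increasing in $\alpha$ as long as no turning point occurs). We would close the argument with a continuation argument in $\alpha$: at the first turning point the stability of $u$, inherited from $u\le u^*$, would be violated.
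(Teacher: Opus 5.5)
The paper does not prove this proposition; it cites it from Brezis--V\'azquez \cite{BV97}, and your outline follows their strategy.

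Your direction Type~II $\Rightarrow$ \eqref{S1P1E1} is fine, provided the limit of the minimal branch is identified with $u^*$ by uniqueness of the singular solution. In the paper's setting that uniqueness is Lemma~\ref{S2S1L1}; it does not follow from the bare hypotheses. Step~1 uses the right lemma but is misstated in three places:
\begin{itemize}
\item it is $v$, not $u^*$, that is a supersolution at $\lambda^*$;
\item the test function must be $(u^*-v)^+$, not $w^-$ or $w$;
\item stability plus strict convexity then give $u^*\le v$ a.e., which contradicts unboundedness of $u^*$. The conclusion is not $v=u^*$.
\end{itemize}

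The genuine gap is Step~2, and the mechanism you propose to close it is false. Take solutions $u_1<u_2$ at the same $\lambda$ and set $w=u_2-u_1$. Strict convexity gives
\[
\lambda\int_B f'(u_1)w^2<\int_B|\nabla w|^2=\lambda\int_B\bigl(f(u_2)-f(u_1)\bigr)w<\lambda\int_B f'(u_2)w^2 .
\]
So only stability of the \emph{larger} solution produces a contradiction. Stability of the smaller one is perfectly consistent: Gel'fand's problem with $N=3$ has a stable minimal solution lying below an unstable one. The claimed pointwise monotonicity of $u(\alpha)$ ``before a turning point'' is also unproved. More to the point, what you need is $u_\alpha\le u^*$ for every $\alpha$, and no comparison between solutions at a common $\lambda$ gives that.

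There are two ways to close the gap.

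(a) Your open--closed idea works if you run it in $\alpha$ and drive it by the maximum principle rather than by intersection counting. By Step~1, $\lambda(\alpha)<\lambda^*$ for all $\alpha$. Let $A=\{\alpha>0:\ u_\alpha\le u^*\}$.
\begin{itemize}
\item $A$ is nonempty: for small $\alpha$, compare $u_\alpha\le\lambda(\alpha)f(\alpha)\psi$ with $u^*\ge\lambda^*f(0)\psi$, where $\psi=(1-|x|^2)/(2N)$.
\item $A$ is closed by continuity in $\alpha$.
\item $A$ is open. Indeed $-\Delta(u^*-u_\alpha)\ge(\lambda^*-\lambda(\alpha))f(u_\alpha)>0$, so the strong maximum principle and Hopf's lemma give $u^*-u_\alpha\ge c(1-|x|)$. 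This bound survives $C^1(\overline{B})$-small changes of $\alpha$.
\end{itemize}
Hence $A=(0,\infty)$, and
\[
\int_B|\nabla\varphi|^2-\lambda(\alpha)\int_B f'(u_\alpha)\varphi^2\ge\Bigl(1-\frac{\lambda(\alpha)}{\lambda^*}\Bigr)\int_B|\nabla\varphi|^2 .
\]
Coercivity comes from $\lambda(\alpha)<\lambda^*$, not from $f'(u_\alpha)<f'(u^*)$ on a set of positive measure. So the linearization has no radial kernel, and $\lambda'(\alpha)\neq0$.

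(b) Alternatively, use your own ``first fact'', the Brezis--V\'azquez theorem: a stable $u^*\in H^1_0$ is the extremal solution. The minimal solutions are nondegenerate for $\lambda<\lambda^*$ by the standard convexity argument. Hence $\underline{u}_\lambda(0)$ increases continuously from $0$ to $\infty$. Since $\alpha$ determines the radial solution uniquely, the minimal branch is the whole curve.

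Route (b) also gives $\lambda(\alpha)\to\lambda^*$. With route (a), your Step~3 still needs that classical solutions exist for every $\lambda<\lambda^*$ once an $H^1_0$ solution exists at $\lambda^*$, as in \cite{BCMR96}. Without it you only get $\lambda(\alpha)\uparrow\lambda^\infty\le\lambda^*$.
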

In \cite{BV97}, this proposition was proved for general bounded domains. However, even when the domain is $B$, it is generally difficult to determine the exact value of $\lambda^*$ and to obtain an explicit expression for the singular solution $u^*$.
In \cite{BV97}, the authors applied Proposition~\ref{S1P1} to the two cases $f(u)=e^u$ and $(u+1)^p$, thereby providing a proof of Type II bifurcation that differs from that in \cite{JL72}, since in these cases the exact values and explicit singular solutions are available. 

Rather than relying on Proposition~\ref{S1P1}, it is useful to establish a sufficient condition on $f$ for the occurrence of Type II bifurcations, as such a condition can be readily applied to concrete examples.
The following result was established in \cite{MN24}:
\begin{proposition}\label{S1P3}
Let $N\ge 10$.
Assume that $f\in C^2[0,\infty)$ satisfies \eqref{f} and
that the $q$-exponent defined by \eqref{q} exists.
If $q_f\le q_{JL}$ and there exist $q_0$ and $q_1$ such that
\begin{equation}\label{S1P3E1}
0\le q_0\le \frac{f'(u)^2}{f(u)f''(u)}\le q_1\quad \text{for}\ u\ge 0,
\end{equation}
\begin{equation}\label{S1P3E2}
q_1(2N-4q_0)\le\frac{(N-2)^2}{4},
\end{equation}
then the bifurcation diagram of \eqref{S1E1} is of Type II.
\end{proposition}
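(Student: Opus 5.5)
The plan is to reduce the statement to Proposition~\ref{S1P1}. We first show that a singular solution $(\lambda^*,u^*)$ exists with $u^*\in H^1_0(B)$, and then verify the stability inequality \eqref{S1P1E1} via Hardy's inequality
\[
\int_B|\nabla\varphi|^2\di x\ge\frac{(N-2)^2}{4}\int_B\frac{\varphi^2}{|x|^2}\di x .
\]
It therefore suffices to prove the pointwise bound $\lambda^*f'(u^*(r))\,r^2\le (N-2)^2/4$ for $0<r<1$.

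The natural tool is the change of variables of \cite{M18}: set $F(u)=\int_u^\infty ds/f(s)$, which is finite by superlinearity (this uses $q_0\ge 0$ and convexity), and put $v=F(u)$. Then $\Delta u+\lambda f(u)=0$ becomes
\[
\Delta v-\frac{f'(u)F(u)}{f(u)}\,\frac{|\nabla v|^2}{v}-\lambda=0,
\qquad \frac{f'(u)F(u)}{f(u)}\to q_f .
\]
By L'Hospital, and by \eqref{S1P3E1} uniformly in $u$, we get $q_0\le f'F/f\le q_1$, and $f'(u)F(u)$ is comparable to $f'F/f\cdot f/f'$. For the singular solution one expects $v\approx \lambda r^2/(2N-4q)$ near $0$. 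In the pure model $f'F/f\equiv q$, the explicit singular solution $v=\lambda r^2/(2N-4q)$ gives
\[
\lambda f'(u)r^2=\frac{f'F}{f}\cdot\frac{f}{F}\cdot\frac{\lambda r^2}{\cdot}\approx q(2N-4q)\,\frac{\lambda r^2}{\lambda r^2}.
\]
Thus condition \eqref{S1P3E2} is exactly the Hardy bound with $q_0$ and $q_1$ inserted where each enters monotonically. Indeed, writing $\lambda f'(u)r^2=\dfrac{f'F}{f}\cdot\dfrac{\lambda r^2}{v}$, we need $v\ge \lambda r^2/(2N-4q_0)$ together with $f'F/f\le q_1$. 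The condition $q_f\le q_{JL}$ guarantees that such a singular solution has finite energy and lies at the stable side, i.e.\ we are in the Type~II regime asymptotically.

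The key steps, in order, are as follows. (1) Existence of a radial singular solution $(\lambda^*,u^*)$ with $u^*\in H^1_0(B)$: take the limit of the regular branch as $\alpha\to\infty$, using the convergence results quoted from \cite{MN20,MN23} (valid since $q_f\le q_{JL}<q_S$), and obtain $H^1_0$ from the asymptotics $\lambda^* f(u^*)\sim C r^{-2}$ up to the $F$ correction. (2) A comparison lemma for the radial ODE in $v$: since $f'F/f\ge q_0$, the function $v^*$ satisfies $(r^{N-1}v')'\,r^{1-N}=\lambda+(f'F/f)v'^2/v\ge\lambda+q_0v'^2/v$. Comparing with the explicit solution $\lambda r^2/(2N-4q_0)$ of the equality case, via a sub/supersolution argument in the variable $r$ starting from $v(0)=0$, gives $v^*(r)\ge\lambda^* r^2/(2N-4q_0)$. (3) Combining, we get $\lambda^*f'(u^*)r^2\le q_1(2N-4q_0)\le (N-2)^2/4$, then apply Hardy's inequality and Proposition~\ref{S1P1}.

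The main obstacle is step (2): the one-sided comparison near the singularity $r=0$, where both functions vanish and the nonlinear term $v'^2/v$ is degenerate. I would first try writing $w=v/r^2$ and deriving a first-order inequality for $r^{N-1}v'$ that, by integrating from $0$, yields $v'(r)\ge 2\lambda r/(2N-4q_0)$. If this turns out to be circular, I would instead argue via a maximum principle on the ratio $v/(\lambda r^2)$, using its limit $1/(2N-4q_f)\ge 1/(2N-4q_0)$ at $r=0$.
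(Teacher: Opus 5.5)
The gap is in step (2): the comparison $F(u^*(r))\ge\lambda^*r^2/(2N-4q_0)$ is not proved, and neither sketched method can prove it. The hypothesis \eqref{S1P3E2} has to be used inside that comparison, not only afterwards.

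Your outer structure, Hardy's inequality plus Proposition~\ref{S1P1}, is the mechanism of Remark~\ref{R2}. Step (3) is fine once you have the comparison, since $\lambda^*f'(u^*)r^2=f'(u^*)F(u^*)\cdot\lambda^*r^2/F(u^*)$. Some minor slips to fix:
the coefficient in the transformed equation is $f'(u)F(u)$, not $f'F/f$;
the bounds $q_0\le f'F\le q_1$ come from $F(u)=\int_u^\infty\frac{f'^2}{ff''}\cdot\frac{f''}{f'^2}\,ds$ together with $f'(\infty)=\infty$, not from L'H\^opital;
finiteness of $F$ uses the upper bound $q_1$, not $q_0\ge0$.

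In the Emden variable of Proposition~\ref{S2S3P1} with $q=q_0$, your claim reads $x^*\le 0$. A maximum principle does not give this. At a positive interior maximum the equation gives $x^{*\prime\prime}=-(2N-4q_0)(e^{x^*}-1)-4(f'F-q_0)<0$, which is no contradiction: the zeroth-order term has the wrong sign, as for $\Delta u+\lambda f(u)$ itself. At $r=1$ you would also need $\lambda^*\le(2N-4q_0)F(0)$, which is not known a priori; Lemma~\ref{S2S1L1} only gives $\lambda^*\le 2NF(0)$.

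Your first-order route is circular. Bounding $q_0v'^2/v\ge 2q_0v'/r$ requires $rv'\ge 2v$, i.e.\ $v/r^2$ nondecreasing. Together with $v/r^2\to\lambda^*/(2N-4q_f)$ as $r\to0$, that monotonicity already is the claim.

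More decisively, neither sketch uses $q_0\le q_{JL}$ or \eqref{S1P3E2}, yet the inequality fails without them. Take $N\le 9$, $q_0=1$, and let $f'F-1$ be a small bump supported in $[u_1,u_2]$ with $u_1$ large.
Then $f$ is exactly exponential for $u\ge u_2$, so $x^*\equiv0$ near $r=0$.
The bump pushes $x^*$ below $0$.
Since $x''-(N-2)x'+(2N-4)x=0$ is oscillatory for $N<10$, $x^*$ becomes positive before $u^*$ reaches $0$.

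The argument behind the cited result is Proposition~\ref{S2S2P1}, which is also the engine of Lemma~\ref{S2S2L1}. In the scaled variable take $\hv(r)=F^{-1}[r^2/(2N-4q_0)]$, i.e.\ $x\equiv 0$ in \eqref{S2S3E0} with $q=q_0$.
Then \eqref{S2S3P1E0} reduces to $4(f'(\hv)F(\hv)-q_0)\ge 0$, so $\hv$ is a singular subsolution.
Also $f'(\hv)=f'(\hv)F(\hv)/F(\hv)\le q_1(2N-4q_0)r^{-2}\le (N-2)^2/(4r^2)$ by \eqref{S1P3E2}.

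Since $v(0,\alpha)<\infty=\hv(0)$, $w=\hv-v(\cdot,\alpha)$ is positive up to a first intersection. Convexity gives $f(\hv)-f(v)\le f'(\hv)w$, so $w''+\frac{N-1}{r}w'+\frac{(N-2)^2}{4r^2}w\ge 0$. With $Z=r^{-(N-2)/2}$, the Wronskian $r^{N-1}(w'Z-wZ')$ is nondecreasing and tends to $0$ as $r\to0$, but is $\le 0$ at the intersection. This forces $w=cZ$, a contradiction, so $v(\cdot,\alpha)<\hv$.

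The same argument applied to $v(\cdot,\alpha_2)-v(\cdot,\alpha_1)$, using $f'(v(\cdot,\alpha_2))\le f'(\hv)$, gives monotonicity of $r_0(\alpha)$ directly. Brezis--V\'azquez is then unnecessary. If you want to keep your route, let $\alpha\to\infty$ to get $v^*\le\hv$; this is exactly your step (2), and step (3) then concludes as in Remark~\ref{R2}.
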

We consider the two nonlinearities \eqref{NMC}.
As mentioned above, in both cases, $q_f=1$, and hence the bifurcation diagram is of Type I for $2<N<10$ (Proposition~\ref{S1P2}).
On the other hand, it was proved in \cite{MN24} that, in both cases,
$$
\frac{7}{16}\le\frac{f'(u)^2}{f(u)f''(u)}\le 1\quad\text{for}\ u\ge 0.
$$
Consequently, 
\begin{equation}\label{11db}
1\cdot\left(2N-4\cdot\frac{7}{16}\right)\le\frac{(N-2)^2}{4}\iff N\ge 11,
\end{equation}
and hence both bifurcation diagrams are of Type II for $N\ge 11$.
The $10$-dimensional case remains unresolved within this framework.
When $N=10$, condition \eqref{S1P3E2} holds only when $q_0=q_1=1$, since $q_0\le 1\le q_1$.
In this case, it follows from \eqref{S1P3E1} that $\frac{f'(u)^2}{f(u)f''(u)} \equiv 1$.
Solving this differential equation, we obtain $f(u)=C_0e^{C_1u}$, which shows that Proposition~\ref{S1P3} does not apply to general nonlinearities.

The $10$-dimensional case is borderline and quite delicate.
Indeed, Type II bifurcation occurs for $f(u)=e^u$, whereas the first author showed in \cite{K25} that Type I bifurcation occurs for $f(u)=e^u(u+1)^{\gamma}$ with $\gamma>1/12$.
Since the associated $q$-exponents are equal to $1$ in both cases, this demonstrates that, in dimension $10$, the bifurcation behavior cannot be characterized solely in terms of the growth rate ($q$-exponent).
For the two nonlinearities in \eqref{NMC}, it was shown in \cite{K25} that the bifurcation diagrams have only finitely many turning points.
However, establishing the occurrence of Type II bifurcation is nontrivial, as it depends not only on the asymptotic behavior of $f$ but also on its delicate global properties.
Consequently, the borderline case falls outside the scope of the existing classification framework and requires a more refined analysis.

The main objective of this paper is to resolve this borderline case and thereby complete the classification of bifurcation diagrams for a broad class of supercritical nonlinearities.
In particular, we establish that Type II bifurcation occurs for several natural model nonlinearities that were not covered by the existing theory.
We now state our main result.
\begin{theorem}\label{T1}
Let $f(u)=\exp((u+1)^p)$ $(p>1)$ or $\exp(\cdots\exp(u)\cdots)$ ($n\ge 2$ times).
Then the bifurcation diagram of \eqref{S1E1} is of Type II for $N=10$.
Therefore, bifurcation diagrams exhibit the same qualitative behavior as the classical Gel'fand problem $\Delta u+\lambda e^u=0$ for these nonlinearities in all dimensions $N\ge 3$.
\end{theorem}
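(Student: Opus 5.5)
The plan is to verify the stability criterion of Proposition~\ref{S1P1} without an explicit formula for $u^*$. The abstract indicates the mechanism: we use a singular \emph{subsolution} $\underline u$ that satisfies a Hardy-type stability inequality, and then compare it with $u^*$.

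First we collect the known facts. For both nonlinearities, $q_f=1<q_S$, so by \cite{MN23} there is a unique radial singular solution $(\lambda^*,u^*)$, and $u(\cdot,\alpha)\to u^*$ as $\alpha\to\infty$. Since $N=10>2$, the behavior $\lambda^*f(u^*)\sim 2(N-2)/r^2$ near $0$ gives $u^*\in H^1_0(B)$, so Proposition~\ref{S1P1} applies. By Hardy's inequality with constant $(N-2)^2/4=16$ in $N=10$, it then suffices to prove
\[
\lambda^* f'(u^*(r))\le \frac{16}{r^2}\qquad (0<r<1).
\]

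Next we construct the comparison function. We look for $w(r)$ with $w\to\infty$ as $r\to0$ and a parameter $\mu>0$ such that $\Delta w+\mu f(w)\ge 0$ in $B\setminus\{0\}$ and $w\le 0$ on $\partial B$. We also require $\mu f'(w)=16/r^2$ identically, or at least $\mu f'(w)\le 16/r^2$. The natural choice is to define $w$ implicitly by $\mu f'(w(r))=16 r^{-2}$, that is, $w=(f')^{-1}(16/(\mu r^2))$. We then compute $\Delta w$ using $w'=-2f'(w)/(rf''(w))$ and the quantities $q(w)=f'^2/(ff'')$ and $f'f'''/f''^2$. The subsolution inequality reduces to a one-variable inequality in $s=w$ that is of order $r^{-2}$. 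For $f=e^u$ this holds with equality exactly when $N=10$, with $2(N-2)=16$. For the two model nonlinearities we must verify the corresponding inequality for all $s$ in the range of $w$. This uses explicit formulas: for $f=\exp((u+1)^p)$, set $t=(u+1)^p$ and $f'=pt^{(p-1)/p}e^t$; for iterated exponentials, use the product structure of $f'$. We expect the correction terms, which are of relative order $1/t$ or $1/\log$, to have the favorable sign. That sign is exactly where these $f$ differ from $e^u(u+1)^\gamma$ with $\gamma>1/12$. If the pure implicit ansatz fails on a bounded range of $s$, we would modify it to $\mu f'(w)=16(1-\delta(r))/r^2$ or shift $w$ by a constant. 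We then choose $\mu$ small so that $w\le0$ on $\partial B$. Any cutoff at $r=1$ needs care, because $w$ must remain a subsolution, so we would take $w_+$ or truncate using convexity.

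Finally, we compare $\underline u$ with $u^*$. Since $\underline u$ is a singular subsolution, a standard monotone-iteration or sub/supersolution argument, using that $u(\cdot,\alpha)$ lies below $u^*$ along the Type I/II alternatives, gives $\mu\le\lambda^*$. We then need the pointwise bound $\lambda^*f'(u^*)\le 16/r^2$. We would obtain it from an intersection-number argument: a singular subsolution that is stable in the Hardy sense forces the classical solutions, and hence $u^*$ by the convergence in \cite{MN23}, to satisfy $\lambda f'(u)\le 16/r^2$. Alternatively, we would use the argument of \cite{BV97}: the existence of a singular $H^1_0$ subsolution $\underline u$ at level $\mu$ with $\mu f'(\underline u)\le 16/r^2$ yields the existence of a minimal, stable singular solution at some $\lambda\le\lambda^*$. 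Uniqueness of the singular solution then identifies it with $(\lambda^*,u^*)$, and \eqref{S1P1E1} follows from Hardy's inequality. The main obstacle is the second step: verifying the subsolution inequality globally rather than only asymptotically, since at $N=10$ the leading order cancels exactly and the sign is decided by lower-order terms over the whole range of $s$.
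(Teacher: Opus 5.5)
As written, the proposal has two genuine gaps. The comparison with $u^*$ is argued incorrectly, and the inequality that carries the proof is only ``expected''. Both gaps can be closed along your own lines.

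\emph{The comparison.} No sub/supersolution argument is available, since there is no supersolution above $w$. Two of your three mechanisms fail.
\begin{itemize}
\item The claim that $u(\cdot,\alpha)$ lies below $u^*$ is false in the Type~I regime, where regular solutions intersect the singular one. In Type~II it is essentially what must be proved.
\item A subsolution alone produces no solution in the Brezis--V\'azquez framework. Convexity gives $f(u)-f(w)\ge f'(w)(u-w)$, which is compatible with the stability of $w$, so no energy contradiction arises.
\item Your inequalities also point the wrong way. The subsolution gives $\lambda^*\le\mu$, not $\mu\le\lambda^*$ (with equality for $e^u$). Since $w(1)=(f')^{-1}(16/\mu)$ decreases in $\mu$, the condition $w\le 0$ on $\partial B$ needs $\mu$ large (in fact $\mu=16/f'(0)$), not small.
\end{itemize}
What works is the intersection argument you mention only in passing. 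It is the paper's Proposition~\ref{S2S2P1}, and it is cleanest in the scaled variable $v(r)=u(r/\sqrt{\lambda})$, where no parameter appears.
\begin{itemize}
\item Let $\hat v$ be a singular subsolution with $f'(\hat v)\le 16/r^2$, and set $w=\hat v-v(\cdot,\alpha)$. Convexity gives $f(\hat v)-f(v)\le f'(\hat v)w$, hence $w''+\frac9r w'+\frac{16}{r^2}w\ge 0$ up to the first zero of $w$.
\item With $Z=r^{-4}$, the quantity $r^9(w'Z-wZ')$ is nondecreasing. It tends to $0$ as $r\to 0$ (this uses \eqref{S2S2P1E2}) and is $\le 0$ at a first zero of $w$, which forces $w\equiv 0$, a contradiction.
\item Hence $v(\cdot,\alpha)<\hat v$. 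The subsolution is an \emph{upper} bound, and this gives both the monotonicity of $r_0(\alpha)$ and $f'(v^*)\le 16/r^2$.
\end{itemize}

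\emph{The postponed inequality.} It holds, with room to spare. Take $\hat v(r)=(f')^{-1}(16/r^2)$ on $(0,4/\sqrt{f'(0)}]$. It vanishes at the right endpoint, so no modification, tuning or truncation is needed. From $\hat v'=-2f'/(rf'')$ one finds
\[
\hat v''+\frac{9}{r}\hat v'+f(\hat v)=\frac{f'^2}{4f''}\Bigl(\frac{4ff''}{f'^2}-3-\frac{f'f'''}{f''^2}\Bigr),
\]
with $f$ and its derivatives evaluated at $\hat v$. For $f=e^g$ and $a=g''/g'^2$, the bracket equals
\[
\frac{3a+7a^2+4a^3+(2g''^2-g'g''')/g'^4}{(1+a)^2}.
\]
This is positive whenever $g'>0$, $g''>0$ and $2g''^2>g'g'''$, which both model nonlinearities satisfy (Section~4).

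Moreover $F(\hat v)=\frac{r^2}{16}(1+\theta)$, with $1+\theta=f'(\hat v)F(\hat v)\to 1$ and $r\theta'=-2\bigl(f'F-\frac{f'^2}{ff''}\bigr)(\hat v)\to 0$ by Lemma~\ref{S3L2}~(i). Hence Lemma~\ref{S2S2L1} applies directly with $q=1$, and it also supplies \eqref{S2S2P1E2}.

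\emph{Comparison with the paper.} The paper defines $x(t)$ implicitly by $4(e^x-1)=h(x+2t+\log 16)$, which gives $f'(v)=\frac{16}{r^2}(1-\frac34 h-\frac14 h^2)$. It then has to prove $h''/h'\le 4$ (Lemma~\ref{S3L4-}, which uses $g'^2\ge g''$). In that notation your choice is $e^{x}=1/(1-h)$. It gives a cruder upper bound for $v^*$, but the subsolution check becomes a pointwise inequality in $u$, and $g'^2\ge g''$ is not needed. The assertion for $N\ne 10$ comes from Propositions~\ref{S1P2} and \ref{S1P3}, as in the paper.
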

We derive an abstract sufficient condition for Type II bifurcation in Lemma~\ref{S2S2L1}.
We show in Corollary~\ref{S3C7} that for $N\ge 10$, the nonlinearities of the form
$$
f(u)=e^{g(u)}
$$
undergo Type II bifurcation, where $g\in C^3[0,\infty)$ satisfies
\begin{equation}\label{g}
g'>0,\qquad g''>0,\qquad g'^2-g''\ge 0,\qquad 2g''^2-g'g'''>0
\quad\text{for}\ u\ge 0.
\end{equation}
It is shown in Section~4 that the two nonlinearities \eqref{NMC} satisfy this condition.
Further examples are given in Examples~\ref{S4EX2} and \ref{S4EX3} including
\begin{align*}
f(u)&=\exp(\cdots\exp((u+1)^p)\cdots)&\quad &(\textrm{$n\ge 1$ times},\ p>1),\\
f(u)&=\exp(\cdots\exp(e(u+1)^{u+1})\cdots)&\quad &(\textrm{$n\ge 0$ times}),\\
f(u)&=\exp(\cdots\exp((u+e)^{(u+e)^p})\cdots)&\quad &(\textrm{$n\ge 0$ times},\ p\ge 1).
\end{align*}

\bigskip
Type II bifurcation is also related to the asymptotic behavior of solutions to the parabolic problem
\begin{equation}\label{PP}
\begin{cases}
\partial_tu=\Delta u+\lambda f(u) & \text{for}\ x\in B,\ t>0,\\
u=0 & \text{for}\ x\in\partial B,\ t>0,\\
u(x,0)=u_0(x) & \text{for}\ x\in B.
\end{cases}
\end{equation}
It was shown in \cite{BCMR96} that \eqref{PP} admits a global-in-time solution if and only if \eqref{PP} admits a possibly weak stationary solution.
Using the methods developed in \cite{BCMR96,PV95}, we prove the following result for general nonlinearities:
\begin{theorem}\label{T2}
Let $N\ge 3$.
Suppose that \eqref{f} holds, that the limit \eqref{q} exists, and that $q_f<q_S$.
Then, the following are equivalent:
\begin{itemize}
\item[{(i)}] The bifurcation diagram of \eqref{S1E1} is of Type II.
\item[(ii)] Problem~\eqref{PP} with $\lambda=\lambda^*$ and $u_0=0$ admits a global-in-time classical solution whose $L^{\infty}$-norm diverges as $t\to\infty$ (blow-up in infinite time).
We call this solution a grow-up solution.
\end{itemize}
Therefore, combining this with Proposition~\ref{S1P1}, we see that (i), (ii) and the following (iii) are equivalent:
\begin{itemize}
\item[(iii)] The singular solution of \eqref{S1E1} is stable in the sense of \eqref{S1P1E1}.
\end{itemize}
\end{theorem}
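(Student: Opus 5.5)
The plan is to prove (i)$\Rightarrow$(ii) and (ii)$\Rightarrow$(i) separately, and to work throughout with the minimal solutions. Because $q_f<q_S$, the results of \cite{MN23} give two facts. First, the radial singular solution $(\lambda^*,u^*)$ is unique. Second, $\lambda(\alpha)\to\lambda^*$ as $\alpha\to\infty$.

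Since $u_0=0$ is a subsolution of the stationary problem, the solution $u(t)$ of \eqref{PP} with $\lambda=\lambda^*$ is nondecreasing in $t$. Its behaviour therefore depends only on how it compares with stationary solutions at the parameter $\lambda^*$.

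\textbf{Step 1: (i)$\Rightarrow$(ii).} Assume Type II. Then $\lambda(\alpha)<\lambda^*$ for every $\alpha$, so there is no classical solution at $\lambda=\lambda^*$. By Proposition~\ref{S1P1}, $u^*$ is stable.

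Next we need $u^*\in H^1_0(B)$, or at least $u^*\in L^1$ with $f(u^*)\,\delta\in L^1$. For $q_f<q_S$ this follows from the asymptotics $u^*\sim$ the inverse of $\int^\infty ds/f(s)$ at rate $r^{-2}$, as in \cite{MN23}.

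Now $u^*$ is a weak stationary solution, hence a supersolution, and $0\le u^*$. By comparison for the monotone parabolic flow, $u(t)\le u^*$ for all $t$. This is justified by approximating $u^*$ from below by the classical minimal solutions $u_\lambda$ with $\lambda\uparrow\lambda^*$, using the argument of \cite{BCMR96}. Hence the solution is global, and it is classical because it is bounded on each finite time interval.

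Since $u(t)$ is monotone in $t$, it converges to a limit $w\le u^*$, and $w$ is a weak stationary solution at $\lambda^*$. If $\|u(t)\|_\infty$ stayed bounded, then $w$ would be a bounded classical solution at $\lambda^*$, contradicting Type II. Therefore $\|u(t)\|_\infty\to\infty$, which is grow-up.

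\textbf{Step 2: (ii)$\Rightarrow$(i).} Assume there is a grow-up solution, and suppose for contradiction that the diagram is of Type I. Then the curve oscillates around $\lambda^*$, and in particular there exists a classical solution $v$ at $\lambda^*$. The minimal solution of the elliptic problem at $\lambda^*$ is then a bounded classical stationary solution $\underline u$.

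Since $0\le\underline u$, comparison gives $u(t)\le\underline u$ for all $t$. This contradicts $\|u(t)\|_\infty\to\infty$, so the diagram is of Type II.

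One point has to be checked here: in Type I, is some $\lambda(\alpha)$ actually equal to $\lambda^*$? Yes. The curve crosses $\lambda^*$ infinitely often, by the definition of Type I and the convergence $\lambda(\alpha)\to\lambda^*$, so it suffices. The same reasoning covers any non-Type-II diagram, since non-monotonicity together with $\lambda(\alpha)\to\lambda^*$ forces $\lambda(\alpha)\ge\lambda^*$ for some $\alpha$. Then at some $\lambda\ge\lambda^*$ a classical solution exists, and hence one exists at $\lambda^*$ as well, because the extremal parameter then exceeds or equals $\lambda^*$ and minimal solutions exist below it.

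\textbf{Step 3: (iii).} The equivalence with (iii) follows by combining the above with Proposition~\ref{S1P1}, once $u^*\in H^1_0(B)$ is known.

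\textbf{Main obstacle.} The delicate point is the dichotomy used in Steps 1 and 2. We must show that Type II is exactly the non-existence of bounded solutions at $\lambda^*$, i.e.\ that $\lambda^*$ coincides with the extremal parameter $\sup\{\lambda:\text{classical solution exists}\}$ precisely in the Type II case. This needs the uniqueness of $u^*$ and the convergence $\lambda(\alpha)\to\lambda^*$ from \cite{MN23}. It also needs the fact that the extremal solution is singular exactly when it equals $u^*$, which I would take from the Brezis--Vazquez theory underlying Proposition~\ref{S1P1}.
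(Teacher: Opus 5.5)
Your argument is correct in outline. It differs from the paper's in the grow-up direction, and one justification in Step~2 has to be replaced.

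\textbf{Comparison with the paper.} For (i)$\Rightarrow$(ii) the paper does not use monotonicity in time. It multiplies \eqref{PP} by $u_t$ and uses $0\le u\le u^*$, $\tilde F(u)\le uf(u)$ and $\int_B f(u^*)u^*\,dx<\infty$. This gives $u_t\in L^2(B\times(\tau,\infty))$ and a uniform $H^1_0$ bound. It then picks $\tau_n$ with $\|u_t(\cdot,\tau_n)\|_{L^2(B)}\to 0$, extracts a weak limit that is a stationary solution, and identifies it with $u^*$ via Type~II, concluding $u(\cdot,t)\to u^*$ in $L^2(B)$.

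You instead use that $u$ is nondecreasing in $t$, so $u(t)\uparrow w\le u^*$. By monotone convergence $w$ is a weak stationary solution, and a bounded $w$ would be a classical solution at $\lambda^*$, impossible under Type~II. This is shorter and needs only $u^*, f(u^*)\in L^1(B)$. The paper's version gives the stronger convergence to $u^*$; you could recover it, since $w$ is radial and unbounded, hence equal to $u^*$ by Lemma~\ref{S2S1L1}.

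For the converse the paper argues from (iii): an unstable $u^*$ gives a bounded minimal solution at $\lambda^*$, which traps $u$. You argue from (i), which needs the extra link ``not Type~II $\Rightarrow$ a classical solution exists at $\lambda^*$''.

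\textbf{The gap in Step~2.} You justify that link by saying non-monotonicity together with $\lambda(\alpha)\to\lambda^*$ forces $\lambda(\alpha)\ge\lambda^*$ somewhere. That is false for continuous curves: a curve can have turning points and still approach its limit from below. What rules this out here is the extremal-solution argument:
\begin{itemize}
\item[(a)] If $\lambda(\alpha)<\lambda^*$ for all $\alpha$, the minimal solutions $u_\lambda$ exist for every $\lambda<\lambda^*$ and satisfy $u_\lambda\le u^*$.
\item[(b)] They increase to a weak solution at $\lambda^*$, which cannot be bounded. So it is a radial singular solution and equals $u^*$ by Lemma~\ref{S2S1L1}.
\item[(c)] Each $u_\lambda$ is stable, so monotone convergence gives \eqref{S1P1E1} for $u^*$, and Proposition~\ref{S1P1} yields Type~II.
\end{itemize}
Your closing paragraph lists uniqueness and convergence but omits the stability of the extremal solution. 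That stability is the actual bridge to Proposition~\ref{S1P1}, and it is also the fact behind the paper's claim that an unstable $u^*$ gives a minimal solution at $\lambda^*$.

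\textbf{A smaller point.} The bound $u\le u^*$ does not follow from approximating $u^*$ from below by the $u_\lambda$, since these are subsolutions at $\lambda^*$. It follows directly: on any time interval where $u$ is bounded, $(u-u^*)_+$ vanishes near the origin, so classical comparison applies away from the singularity.
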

It was shown in \cite{PV95} that Theorem~\ref{T2}~(ii) holds for $f(u)=e^u$ and $N\ge 10$, since the singular solution has the explicit expression $(\lambda^*,u^*)=(2N-4,-2\log|x|)$ and the stability of $u^*$ can be verified by Hardy's inequality.
The case $f(u)=u^p$ with nonhomogeneous Dirichlet boundary conditions was studied in \cite{DGLV98}.
Note that an explicit singular solution is available also in the problem of \cite{DGLV98}.
The existence of grow-up solutions can be reduced to that of stable singular solutions.
However, determining the stability of singular solutions is not straightforward, and grow-up solutions are therefore not known for a broad class of nonlinearities.
By Proposition~\ref{S1P3} and Theorem~\ref{T1}, we obtain the following:
\begin{corollary}
Let $f(u)=\exp((u+1)^p)$ $(p>1)$ or $\exp(\cdots\exp(u)\cdots)$ ($n\ge 2$ times).
If $N\ge 10$, then (i), (ii), and (iii) in Theorem~\ref{T2} hold.
\end{corollary}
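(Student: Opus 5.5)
The corollary follows by combining results already available; no new analysis is needed. The plan is to establish (i) for each $N\ge 10$ and then pass to (ii) and (iii) through Theorem~\ref{T2}.

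\emph{Step 1: the hypotheses of Theorem~\ref{T2}.} For both nonlinearities, \eqref{f} holds: $f$, $f'$, $f''$ are positive on $[0,\infty)$, since each is a composition of exponentials with $(u+1)^p$, $p>1$, or with $u$. The limit \eqref{q} exists and $q_f=1$, as recalled above from \cite{FI18,M18}. Since $q_S=(N+2)/4>1$ for $N>2$, we have $q_f<q_S$ for every $N\ge 10$. Hence Theorem~\ref{T2} applies, and (i), (ii) and (iii) are mutually equivalent. The singular solution $(\lambda^*,u^*)$ appearing in (ii) and (iii) exists and is unique by \cite{MN23}, because $q_f<q_S$.

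\emph{Step 2: Type II for $N\ge 11$.} We use Proposition~\ref{S1P3}. For $N\ge 11$ we have $q_{JL}=(N-2\sqrt{N-1})/4>1=q_f$ (at $N=11$, $q_{JL}=5/4$), so $q_f\le q_{JL}$. From \cite{MN24}, \eqref{S1P3E1} holds with $q_0=7/16$ and $q_1=1$. By \eqref{11db}, \eqref{S1P3E2} then holds exactly when $N\ge 11$. Therefore the bifurcation diagram is of Type II.

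\emph{Step 3: Type II for $N=10$.} This is precisely Theorem~\ref{T1}.

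Combining Steps 2 and 3, (i) holds for all $N\ge 10$, and Step 1 then gives (ii) and (iii). The only point requiring any care is that Theorem~\ref{T2} applies uniformly in $N\ge 10$ without separate treatment of the borderline case; this is settled by $q_f=1<q_S$. All the substantive difficulty sits inside Theorem~\ref{T1}, which we take as given here.
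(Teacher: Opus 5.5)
Your proof is correct and follows exactly the paper's route: Proposition~\ref{S1P3} with $q_0=7/16$, $q_1=1$ and \eqref{11db} for $N\ge 11$, Theorem~\ref{T1} for $N=10$, and then the equivalences of Theorem~\ref{T2}, which apply because $q_f=1<q_S$. One harmless numerical slip: at $N=11$ one has $q_{JL}=(11-2\sqrt{10})/4\approx 1.17$, not $5/4$; your argument only uses $q_{JL}>1$ for $N>10$, and that holds because $(N-4)^2-4(N-1)=(N-2)(N-10)>0$.
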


\bigskip
We explain the technical details and difficulties.
We consider $v(r)=u(\frac{r}{\sqrt{\lambda}})$.
Then $v$ satisfies the equation
$$
v''+\frac{N-1}{r}v'+f(v)=0.
$$
Since there exists a limit \eqref{q} with $q_f<q_S$, this equation has a unique singular solution $v^*$ (Proposition~\ref{S2S1P1}) and $v^*$ has the following asymptotic behavior
\begin{equation}\label{v^*}
v^*(r)=F^{-1}\left[\frac{r^2}{2N-4q_f}e^{-x^*(t)}\right]\quad\text{as}\ r\to 0,
\end{equation}
where $t=-\log r$, $F^{-1}$ is the inverse function of
\begin{equation}\label{F}
F(u)=\int_u^{\infty}\frac{\di s}{f(s)},
\end{equation}
and $x^*$ satisfies
$$
x^{*\prime\prime}-(N+2-4q_f)x^{*\prime}+(2N-4q_f)(e^{x^*}-1)+(q_f-1)(x^{*\prime})^2-(q_f-f'(v^*)F(v^*))(x^{*\prime}+2)^2=0.
$$
When $N=10$, we see $q_{JL}=1$.
Type I bifurcation occurs when $1=q_{JL}<q_f<q_S$ (Proposition~\ref{S1P2}).
Thus, we consider the case $q_f=1$.
When $N=10$, the above equation becomes
$$
x^{*\prime\prime}-8x^{*\prime}+16(e^{x^*}-1)-(1-f'(v^*)F(v^*))(x^{*\prime}+2)^2=0.
$$
Since it is difficult to study $x^*$ directly, we take a new approach.
We use a sufficient condition given by Lemma~\ref{S2S2L1} which is a variant of \cite[Proposition~3.1]{MN24}.
Lemma~\ref{S2S2L1} asserts that if there exists a stable singular subsolution $v\in H^1$ in the sense that
\begin{equation}\label{ub}
v''+\frac{N-1}{r}v'+f(v)\ge 0,\qquad
f'(v)\le\frac{(N-2)^2}{4r^2},
\end{equation}
then the bifurcation diagram is of Type II.
Although Lemma~\ref{S2S2L1} is related to Proposition~\ref{S1P1}, it does not require an explicit representation of the singular solution $v^*$, and is therefore more practical for constructing examples.
The function $v$ satisfying \eqref{ub} is automatically an upper bound for the singular solution $v^*$ (see Remark~\ref{R2}).
Interestingly, although $v$ provides an upper bound for $v^*$, it is not a supersolution.
We seek a function $v$ in the form
\begin{equation}\label{S1E10}
v(r)=F^{-1}\left[\frac{r^2}{2N-4q}e^{-x(t)}\right],\quad t=-\log r
\end{equation}
with $N=10$ and $q=1$.
The expression \eqref{S1E10} comes from the unique singular solution $v^*$ whose asymptotic behavior is given by \eqref{v^*}.
Note that the transformation $v\mapsto x$ is a generalization of Emden's transformation.

The main difficulty lies in constructing a function $x(t)$ such that $v(r)$ is a stable subsolution.
When $N=10$ and $q=1$, this problem is equivalent to finding $x(t)$ satisfying
$$
x''-8x'+16(e^x-1)-(1-f'(v)F(v))(x'+2)^2\ge 0,\qquad f'(v)\le 16e^{2t}.
$$
It is difficult to construct $x(t)$ using explicit combinations of elementary functions.
In particular, while it is relatively easy to construct a solution as $v\to\infty$, it is much harder to extend it so that two inequalities remain valid near $v=0$.
To overcome this difficulty, we define $x(t)$ implicitly (see \eqref{S3E1}).
Our choice of $v(r)$ provides a sharp approximation to $v^*$, both as $v\to\infty$ and near $v=0$.
Importantly, the global behavior of $v$ can be analyzed, whereas that of $v^*$ is difficult to study, and \eqref{ub} is difficult to verify for $v^*$.
Although each step in the construction is elementary, calculations involved in our approach are highly nontrivial.
The method developed in this paper may be useful for obtaining upper bounds for a wide class of singular solutions.

\bigskip
In Section 2, we recall known results about the singular solution, abstract sufficient conditions, and a transformation.
Some of them are not proved in the literature, and hence we prove them.
In Section 3, we construct the function $x(t)$ under the condition \eqref{g}.
In Section 4, we show that examples including \eqref{NMC} satisfy the sufficient condition.
In Section 5, we prove Theorem~\ref{T2}.

\bigskip
\noindent{\bf Acknowledgements}\\
KK was supported by JSPS KAKENHI Grant Number 26KJ0083, as part of the JSPS Research Fellowship for Young Scientists.
YM was supported by JSPS KAKENHI Grant Numbers 24K00530, 25H00591.\\

\section{{\bf Preliminaries}}
The purpose of this section is to establish a general framework for proving Type II bifurcation via the construction of suitable upper solutions.
The results in this section are valid for general $N$, not only for $N=10$.

We consider radial solutions of \eqref{S1E1}, for which the problem becomes
\begin{equation}\label{S2E1}
\begin{cases}
u''+\frac{N-1}{r}u'+\lambda f(u)=0 & \text{for}\ 0<r<1,\\
u(1)=0,\\
u>0 & \text{for}\ 0<r<1.
\end{cases}
\end{equation}
We use a new unknown function $v(r)=u(\frac{r}{\sqrt{\lambda}})$.
Problem \eqref{S2E1} is equivalent to the following:
\begin{equation}\label{S2E2}
\begin{cases}
v''+\frac{N-1}{r}v'+f(v)=0 & \text{for}\ 0<r<\sqrt{\lambda},\\
v(\sqrt{\lambda})=0,\\
v>0 & \text{for}\ 0<r<\sqrt{\lambda}.
\end{cases}
\end{equation}
Let $v(r,\alpha)$ be the unique solution of
\begin{equation}\label{S2E3}
\begin{cases}
v''+\frac{N-1}{r}v'+f(v)=0\quad \text{for}\ r>0,\\
v(0)=\alpha\ \ \text{and}\ \ v'(0)=0,
\end{cases}
\end{equation}
where $\alpha>0$ is a parameter.
Let $r_0(\alpha)$ denote the first zero of $v(r,\alpha)$.
Then $v$ is a bounded solution of \eqref{S2E2} if and only if $\sqrt{\lambda}=r_0(\alpha)$.
Thus, the bifurcation curve is given by $\lambda=r_0(\alpha)^2$.
To prove Type II bifurcation, we show that $r_0(\alpha)$ is increasing in $\alpha$.

\subsection{Singular solution}
We construct a singular solution to the problem
\begin{equation}\label{S2S1E1}
v''+\frac{N-1}{r}v'+f(v)=0
\end{equation}
near the singular point $r=0$ under the condition that $f$ is supercritical ($q_f<q_S$).
We assume that $f\in C^2[0,\infty)$ satisfies \eqref{f} and that $F(0)<\infty$.\\
The singular solution is written in terms of the inverse function of $F$ defined by \eqref{F}, we observe the following:
$$
\text{$F(u)<\infty$ for $u\ge 0$, $F(u)$ is decreasing, $\lim_{u\to\infty}F(u)=0$,}
$$
$$
\text{$F^{-1}(z)$ is defined for $0<z\le F(0)$, and $\lim_{\ z\to 0^+}F^{-1}(z)=\infty$.}
$$
\begin{proposition}\label{S2S1P1}
Let $N\ge 3.$
Assume that $f\in C^2[0,\infty)$, $f(u)>0$ and $F(u)<\infty$ for large $u>0$.
Assume also that the limit \eqref{q} exists and that $q_f<q_S$.
Then, \eqref{S2S1E1} admits a unique singular solution $v^*(r)$ for $0<r<\delta$ with some $\delta>0$, and the regular solution $v(r,\alpha)$ satisfies
\begin{equation}\label{S2S1P1E0-}
v(r,\alpha)\to v^*(r)\quad\text{in}\ C^2_{loc}(0,\delta]\quad\text{as}\ \alpha\to\infty.
\end{equation}
Furthermore, the positive singular solution $v^*$ satisfies
\begin{equation}\label{S2S1P1E0}
-r^{N-1}v^{*\prime}(r)=\int_0^rf(v^*(s))s^{N-1}\di s
\end{equation}
for $0<r<\delta$ and
\begin{equation}\label{S2S1P1E1}
v^*(r)=F^{-1}\left[\frac{r^2}{2N-4q_f}(1+\theta(r))\right]
\quad\text{as}\ r\to 0^+,
\end{equation}
where $F$ is defined by \eqref{F}, $\theta(r)\to 0$,
\begin{equation}\label{S2S1P1E2}
\theta(r)\in C^1(0,\delta)\quad\text{and}\quad
r\theta'(r)\to 0\ \text{as}\ r\to 0.
\end{equation}
\end{proposition}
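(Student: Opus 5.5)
The plan is to pass to Emden-type variables and reduce everything to a perturbed autonomous first-order system near an equilibrium; existence, uniqueness, convergence and the asymptotics \eqref{S2S1P1E1} then follow from a stable-manifold argument. With $t=-\log r$, I write
$$
v(r)=F^{-1}\left[\frac{r^2}{2N-4q_f}e^{-x(t)}\right].
$$
This is the same change of variables as \eqref{S1E10}, and \eqref{S2S1P1E1} is equivalent to $x(t)\to 0$ with $x'(t)\to 0$ as $t\to\infty$, taking $1+\theta=e^{-x}$. Using $F'=-1/f$, I compute $v'$ and $v''$ in terms of $x,x'$. Substituting into \eqref{S2S1E1} gives the equation for $x$ displayed in the introduction:
$$
x''-(N+2-4q_f)x'+(2N-4q_f)(e^{x}-1)+(q_f-1)(x')^2-\bigl(q_f-f'(v)F(v)\bigr)(x'+2)^2=0 .
$$
The last coefficient $\epsilon(t):=q_f-f'(v)F(v)$ tends to $0$ whenever $v\to\infty$. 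Indeed, by L'Hospital, $\lim_{u\to\infty}f'F=\lim_{u\to\infty} f'^2/(ff'')=q_f$. The hypothesis $F<\infty$ together with \eqref{q} ensures that this step is valid.

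\textbf{Linearization.} The limiting autonomous system with $\epsilon\equiv 0$ has the equilibrium $(x,x')=(0,0)$. Its linearization is $y''-(N+2-4q_f)y'+(2N-4q_f)y=0$. Since $q_f<q_S$ gives $2N-4q_f>0$ and $N+2-4q_f>0$, both characteristic roots have positive real part. The origin is therefore a source in forward $t$, that is, an attractor as $t\to-\infty$. To get solutions with $x\to 0$ as $t\to+\infty$, I treat the problem as a fixed point on $[T,\infty)$. I rewrite the equation as $Ly=N(y,y',t)$, where $N$ is quadratic in $(y,y')$ plus $\epsilon(t)(y'+2)^2$. Since both exponents are unstable in forward time, the bounded solution is forced: I invert $L$ on bounded functions via $y(t)=-\int_t^\infty G(t-s)N(s)\,ds$ with a decaying kernel. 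A contraction in $C^1_b[T,\infty)$ for large $T$ then yields a unique small solution.

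\textbf{Main obstacle.} The main difficulty is that $\epsilon(t)$ depends on $v$, and hence on $x$ itself. I plan to use $s=F(v)$ as an auxiliary variable, so that $\epsilon=\tilde\epsilon(s)$ with $s=\frac{e^{-2t}e^{-x}}{2N-4q_f}$. Then $\epsilon$ is a continuous function of $(t,x)$ that tends to $0$ uniformly for $|x|\le 1$ as $t\to\infty$. This suffices for a Schauder fixed point, which gives existence. The weak point is uniqueness, because Lipschitz control of $\epsilon$ in $x$ needs $\frac{d}{ds}\tilde\epsilon$. Here I use $f\in C^2$: $\tilde\epsilon'(s)\,s$ is comparable to $(f''F-\ldots)F$, and I expect $s\,\tilde\epsilon'(s)$ to be bounded, possibly after an averaging argument if the limit \eqref{q} exists without a rate.

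\textbf{Remaining claims.} For uniqueness among all singular solutions, I would show that any singular solution has $x$ bounded as $t\to\infty$, by comparison with the intersection-number results of \cite{MN23}, which I would cite. It then lies in the small neighbourhood and coincides with the fixed point. The integral identity \eqref{S2S1P1E0} follows by integrating $(r^{N-1}v')'=-r^{N-1}f(v)$ from $\epsilon'$ to $r$. The boundary term $\epsilon'^{N-1}v'(\epsilon')\sim \epsilon'^{N-1}\cdot\epsilon'/F(v)\cdot\frac{1}{f}\cdot\ldots$ is of order $\epsilon'^{N-2}\to0$ by the asymptotics, and $f(v^*)s^{N-1}\sim s^{N-3}$ is integrable. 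Finally, the convergence \eqref{S2S1P1E0-} of $v(\cdot,\alpha)$ follows from a scaling argument. Regular solutions correspond to trajectories leaving the origin as $t\to-\infty$; by continuous dependence, together with uniqueness of the singular solution and a compactness bound for regular solutions from below by $v^*$, they converge to $v^*$ in $C^2_{loc}(0,\delta]$.
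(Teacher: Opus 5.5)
The paper does not prove this proposition itself. It imports it from \cite{MN23} (and \cite{HM26} for $r\theta'\to0$), so your plan has to stand on its own.

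\textbf{Existence.} This half is sound: the unique bounded solution of the $x$-equation on $[T,\infty)$, obtained by contraction, is the right mechanism, and the step you call the weak point is not one. With $u=F^{-1}(s)$,
\[
s\,\frac{d}{ds}\bigl(f'(u)F(u)\bigr)=f'F-ff''F^2=f'F\Bigl(1-\frac{ff''}{f'^2}\,f'F\Bigr)\longrightarrow q_f\Bigl(1-\frac{1}{q_f}\,q_f\Bigr)=0.
\]
This is the same computation that gives $h'(y)\to0$ in Lemma~\ref{S3L3}. Hence $\partial_x\epsilon(t,x)\to0$ uniformly for $|x|\le1$, and the map is a contraction on a small ball of $C^1_b[T,\infty)$. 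That gives existence, $x,x'\to0$, and uniqueness inside that ball, with no averaging needed. Mere boundedness of $s\tilde\epsilon'(s)$ would not give a contraction, and Schauder on a half-line would need extra compactness, so drop it. In the integral identity your exponents are the $q_f=1$ ones. In general $f(v^*(s))=s^{-2q_f+o(1)}$, so the integrand is $s^{N-1-2q_f+o(1)}$ and the boundary term is $r^{N-2q_f+o(1)}$; both are harmless since $q_f<q_S$.

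\textbf{Gap 1: uniqueness among all singular solutions.} The statement asserts this, and Lemma~\ref{S2S1L1} uses it to get a unique $\lambda^*$. Your contraction only identifies singular solutions whose $x$ eventually lies in the small ball. The real content is showing that an arbitrary singular solution $w$ has $x\to0$ and $x'\to0$. An unspecified appeal to intersection-number results does not supply this, and boundedness of $x$ alone would not finish the job either. What is needed:
\begin{enumerate}
\item Prove \eqref{S2S1P1E0} for arbitrary $w$. Since $r^{N-1}w'$ is monotone, a negative limit at $0$ forces $w\ge c\,r^{2-N}$, which is incompatible with supercritical growth. A positive limit contradicts $w\to\infty$.
\item Prove two-sided bounds $cr^2\le F(w(r))\le Cr^2$.
\item Use a Lyapunov function for the limiting autonomous equation to show that bounded orbits converge to $(0,0)$. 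For $q_f=1$ it is $E=\frac12x'^2+(2N-4)(e^x-1-x)$, with $E'=(N-2)x'^2$. For $q_f>1$ it is the Emden--Fowler energy in $e^{(q_f-1)x}$.
\end{enumerate}
This last step genuinely uses $N+2-4q_f>0$: at $q_f=q_S$ the Fowler solutions give bounded singular orbits that do not converge.

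\textbf{Gap 2: the convergence \eqref{S2S1P1E0-}.} This is not established, and one ingredient you cite is false. Regular solutions are not bounded below by $v^*$: $v(r,\alpha)<v^*(r)$ near $r=0$, and in general the two intersect. The usable bound is from above, $v(r,\alpha)\le F^{-1}[r^2/(2N)]$, which gives precompactness in $C^2_{loc}(0,\delta]$. You must then exclude that the limit is a regular $v(\cdot,\beta)$. That requires:
\begin{enumerate}
\item a blow-up at scale $r\sim F(\alpha)^{1/2}$ to the limit equation of \cite{M18};
\item the fact that the limiting regular orbit tends to the equilibrium as its time goes to $-\infty$, which again comes from the Lyapunov function and $q_f<q_S$;
\item the attraction of the equilibrium in backward $t$, which carries the closeness from times of order $\frac12\log\frac{1}{F(\alpha)}$ down to a fixed range of $r$.
\end{enumerate}
Note that in your variables a regular solution has $x'\to-2$ as $t\to+\infty$. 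It is the rescaled limit orbit, not $x_\alpha$ itself, whose $t\to-\infty$ behaviour matters.
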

All statements in Proposition~\ref{S2S1P1} except \eqref{S2S1P1E2} are the same as \cite[Theorem~1.1 and Lemma~2.5]{MN23}.
A brief proof of \eqref{S2S1P1E2} can be found in \cite[Proposition~3.2]{HM26}.

Under the condition \eqref{f}, both \eqref{S2E1} and \eqref{S2E2} admit singular solutions for a certain $\lambda^*$.
\begin{lemma}\label{S2S1L1}
Assume all assumptions of Proposition~\ref{S2S1P1}.
Assume, in addition, that \eqref{f} holds.
Then, the following assertions hold:\\
(i) There exists a unique $\lambda^*>0$ such that \eqref{S2E2} admits a singular solution $v^*$.
Furthermore, $v^*$ is a unique singular solution of \eqref{S2E2} with $\lambda=\lambda^*$,
and the following holds:
\begin{equation}\label{S2S1L1E0}
v(r,\alpha)\to v^*(r)\quad\text{in}\ C^2_{loc}(0,\infty)\quad\text{as}\ \alpha\to\infty.
\end{equation}
(ii) \eqref{S2E1} has a unique singular solution $(\lambda^*,u^*)$. Furthermore, $u^*\in H^1_0(B)$
\end{lemma}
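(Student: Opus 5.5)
Let $v^*$ denote the singular solution of \eqref{S2S1E1} on $(0,\delta)$ given by Proposition~\ref{S2S1P1}. The plan is to extend $v^*$ to the right by solving the ODE, and to take $\sqrt{\lambda^*}$ to be its first zero. Then part (ii) follows by rescaling, together with an $H^1$ estimate near the origin.

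First I would show that the extension of $v^*$ has a finite first zero. Integrating \eqref{S2S1P1E0} and using the equation, $v^*$ is decreasing wherever it is defined and positive. Since $f\ge f(0)>0$ by \eqref{f}, we have $-r^{N-1}v^{*\prime}\ge \int_0^r f(0)s^{N-1}\di s$, which gives $v^{*\prime}\le -f(0)r/N$. Hence $v^*$ reaches $0$ at some finite $r_0^*$, and I set $\lambda^*=(r_0^*)^2$. Then $v^*$ is a singular solution of \eqref{S2E2} with this $\lambda^*$.

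Next, uniqueness. Any singular solution of \eqref{S2E2}, for any $\lambda$, is a singular solution of \eqref{S2S1E1} near $0$. By the uniqueness part of Proposition~\ref{S2S1P1} it coincides with $v^*$ on $(0,\delta)$, and then, by uniqueness for the regular ODE initial value problem at $r=\delta$, it coincides with $v^*$ wherever both are defined. Its first zero is therefore $r_0^*$, which forces $\lambda=\lambda^*$. For \eqref{S2S1L1E0}, I would combine \eqref{S2S1P1E0-} (convergence in $C^2$ near $r=\delta$) with continuous dependence of the ODE on initial data at $r=\delta/2$, which gives local uniform convergence on compact subsets of $(0,\infty)$. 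Here $f$ is only defined on $[0,\infty)$; I would extend it to a $C^2$ positive function on $\R$ so that $v(r,\alpha)$ is defined for all $r$.

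For (ii), set $u^*(x)=v^*(\sqrt{\lambda^*}|x|)$. This gives a singular solution of \eqref{S2E1}, and its uniqueness transfers from (i) through the scaling $v(r)=u(r/\sqrt\lambda)$.

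It remains to show $u^*\in H^1_0(B)$, and this is the main step. I need $\int_0 |v^{*\prime}|^2 r^{N-1}\di r<\infty$. From \eqref{S2S1P1E1} we have $F(v^*)\sim r^2/(2N-4q_f)$. Differentiating, $-v^{*\prime}/f(v^*) = (F(v^*))'$, and using \eqref{S2S1P1E2} this is $\sim 2r/(2N-4q_f)$, so $|v^{*\prime}|\approx r f(v^*)$. To bound $f(v^*)$, I would use $f(u)F(u)\to$ a finite limit: by L'Hospital, $\lim f F = \lim (f^2/f')\cdot(-(1/f))'$, and one can check that $f(u)F(u)/u\to 1/p_f$ type relations give $fF\cdot f'/f \to q_f$-dependent constants. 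Simpler, I would use $f(u)F(u)\le C\,f(u)/f'(u)\cdot$const, i.e. $f'(v^*)F(v^*)\to q_f$, hence $f(v^*)\lesssim F(v^*)^{-1}\cdot f(v^*)/f'(v^*)\cdot q_f$. The cleanest route is to note $|v^{*\prime}|\approx r f(v^*)$ and $f(v^*)\le C r^{-2}\,(f/f')(v^*)/F(v^*)\cdot F(v^*)$, which reduces to showing $(f/f')(v^*)\,f(v^*)\lesssim r^{-2}\cdot$, i.e. $f(v^*)F(v^*)$ bounded. This holds since $fF\sim (f/f')\cdot \frac{1}{q_f}$-type and $f/f'$ is bounded for convex increasing $f$ (as $f/f'$ is nonincreasing by $f''>0$... actually $(f/f')'=1-ff''/f'^2\to 1-1/q_f\ge0$, so $f/f'\lesssim u$). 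With this, $|v^{*\prime}|\lesssim r^{-1}\cdot(\text{at most }v^*)$, and $v^*$ grows like at most $\log$-type or power $r^{-2/(p_f-1)}$, and $\int |v^{*\prime}|^2r^{N-1}$ converges because $q_f<q_S$ makes the exponent integrable, mirroring the power case $2/(p-1)<(N-2)/2$. The delicate bookkeeping of this integrability via $q_f<q_S$ is where I expect the work to lie.
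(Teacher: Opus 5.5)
Part (i) is fine. Your bound $v^{*\prime}\le -f(0)r/N$, valid while $v^*\ge 0$, is a simpler alternative to the paper's comparison $F(v^*)\ge r^2/(2N)$, and your uniqueness and convergence arguments match the paper's.

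The gap is in the step you yourself call the main one, $u^*\in H^1_0(B)$. Several intermediate claims there are false. $f(u)F(u)$ does not tend to a finite limit, and $f/f'$ is not bounded: for $f(u)=(u+1)^p$ one has $fF=(u+1)/(p-1)$ and $f/f'=(u+1)/p$. Only $f'(u)F(u)\to q_f$ holds. After retreating to $f/f'\lesssim u$, you obtain only $|v^{*\prime}|\le Cv^*/r$ with an unspecified $C$. Integrating this gives $v^*\le Cr^{-C}$ with an uncontrolled exponent, which says nothing about $\int_0|v^{*\prime}|^2r^{N-1}\di r$. The rate $r^{-2/(p_f-1)}$ you then invoke is asserted, not derived, and it is exactly the content that needs proof.

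The missing idea is to bound $f(v^*)$ directly by a power of $r$, with no information on the growth of $v^*$ itself. Since $f'F\to q_f<q_S$, pick $q_1\in(q_f,q_S)$ and $M$ with $f'(u)F(u)\le q_1$ for $u\ge M$. Then
$\frac{d}{du}\bigl(f(u)F(u)^{q_1}\bigr)=\bigl(f'(u)F(u)-q_1\bigr)F(u)^{q_1-1}\le 0$,
so $f(u)\le CF(u)^{-q_1}$ for $u\ge M$, and \eqref{S2S1P1E1} gives $f(v^*(r))\le Cr^{-2q_1}$. Plugging this into \eqref{S2S1P1E0} yields $|v^{*\prime}(r)|\le Cr^{1-2q_1}$; the integral converges since $N-2q_1>0$. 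Finally, $\int_0 r^{2-4q_1}r^{N-1}\di r<\infty$ precisely because $q_1<q_S=(N+2)/4$.

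Your own route can also be rescued, but only by tracking the sharp constant. From $(f/f')'=1-ff''/f'^2\to 1-1/q_f$ you get $f/f'(u)\le(1-1/q_f+\e)u+C$. Combined with $f'(v^*)F(v^*)\to q_f$, this gives $-v^{*\prime}\le\bigl(2(q_f-1)+\e\bigr)v^*/r+C/r$. Gronwall then gives $v^*=O(r^{-2(q_f-1)-\e})$ and $v^{*\prime}=O(r^{1-2q_f-\e})$, which suffices for small $\e$. This is more roundabout than the monotonicity of $fF^{q_1}$.
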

\begin{proof}
(i) Let $v^*(r)$ be the singular solution near $0$ given in Proposition~\ref{S2S1P1}.
We extend $v^*$ in the positive direction of $r$.
It follows from \eqref{S2S1P1E0} that $v^{*\prime}\le 0$, and hence
$$
-r^{N-1}v^{*\prime}(r)=\int_0^rf(v^*(s))s^{N -1}\di s\ge f(v^*(r))\int_0^rs^{N-1}\di s=\frac{f(v^*(r))r^N}{N}.
$$
Integrating $-\frac{v^{*\prime}}{f(v^*)}\ge\frac{r}{N}$ over $(0,r]$, we have $F(v^*)\ge\frac{r^2}{2N}$,
and hence
$v^*(r)\le F^{-1}\left[\frac{r^2}{2N}\right]$.
By the assumption, $F(0)<\infty$.
Let $r_1=\sqrt{2NF(0)}$. Then,
$$
v^*(r_1)\le F^{-1}\left[\frac{r_1^2}{2N}\right]=0,
$$
which indicates that $v^*$ has the first zero $r_0$.
Let $\lambda^*=r^2_0$.
Then \eqref{S2E2} admits a singular solution, and hence \eqref{S2E1} admits a singular solution $(\lambda^*,u^*)$.\\
The uniqueness of $\lambda^*$ follows from the uniqueness of $v^*$.
It is obvious that $v^*$ is a unique singular solution of \eqref{S2E2} with $\lambda=\lambda^*$.
The convergence \eqref{S2S1L1E0} follows from \eqref{S2S1P1E0-}, because of the continuous dependence of the solution of \eqref{S2S1E1} with respect to initial data $(v(\delta),v'(\delta))$.
All assertions of (i) are proved.\\
(ii) Let $u^*(r)=v^*(\sqrt{\lambda^*}r)$.
Then, $(\lambda^*,u^*)$ is a singular solution of \eqref{S2E1}.
The uniqueness of the singular solution $(\lambda^*,u^*)$ of \eqref{S1E1} also follows from the uniqueness of $v^*$.
We omit the details.

We show that $u^*\in H^1_0(B)$.
Let $B_{\rho}$ denote the ball centered at $O$ with radius $\rho$.
It is enough to show that $v^*\in H^1_0(B_{\sqrt{\lambda^*}})$, since $u^*(r)=v^*(\sqrt{\lambda^*}r)$.
Because $q_f<q_S$, there exists $q_1\in (1,q_S)$ and $M>0$ such that $f'(u)F(u)\le q_1$ for $u>M$. We have
$$
\frac{d}{du}\left(f(u)F(u)^{q_1}\right)=\left(f'(u)F(u)-q_1\right)F(u)^{q_1-1}
$$
for $u\ge M$. Then $f(u)F(u)^{q_1}$ is nonincreasing in $u\ge M$, and hence we obtain
$$
f(u)F(u)^{q_1}\le C\quad\text{for}\ u\ge M,
$$
where $C>0$. 
From \eqref{S2S1P1E1} we obtain
$$
f(v^*)\le CF[v^*(r)]^{-q_1}\le Cr^{-2q_1}
$$
for small $r>0$.
Hence, 
\begin{equation}\label{S2S1L1E1}
f(v^*(r))=O(r^{-2q_1})\quad\text{as}\ r\to 0^+.
\end{equation}
We obtain
$$
\int_0^rf(v^*(s))s^{N-1}\di s=O(r^{N-2q_1})\quad\text{as}\ r\to 0,
$$
where we used $N-1-2q_1>\frac{N}{2}-2>-1$ by $q_1<q_S$.
Because of \eqref{S2S1P1E0}, we obtain $v^{*\prime}(r)=O(r^{1-2q_1})$ as $r\to 0$.
This implies that $v^*(r)=O(r^{2-2q_1})$ as $r\to 0^+$.
For small $\delta>0$, 
$$
\int_{B_{\delta}}|v^*|^2\di x\le C\int_0^{\delta}r^{3+N-4q_1}\di r<\infty\quad\text{and}\quad
\int_{B_{\delta}}|\nabla v^*|^2\di x\le C\int_0^{\delta}r^{1+N-4q_1}\di r<\infty,
$$
since $3+N-4q_1>-1$ and $1+N-4q_1>-1$.
Since $v^*$ is of class $C^1$ outside a neighborhood of the origin, we have $v^*\in H^1_0(B_{\sqrt{\lambda^*}})$, and hence $u^*\in H^1_0(B)$.
The proof is complete.
\end{proof}

\subsection{Monotonicity of the first zero}
Let $R>0$.
We consider the equation
\begin{equation}\label{S2S2E2}
Z''+\frac{N-1}{r}Z'+Q(r)Z=0\quad\text{for}\ 0<r\le R,
\end{equation}
where $Q\in C(0,R]$ and $Q(r)>0$ for $0<r\le R$.

The following sufficient condition for Type II bifurcation was obtained in \cite[Proposition~3.1]{MN24}.
The proof is omitted.
\begin{proposition}\label{S2S2P1}
Let $\hv\in C^2(0,R]$ be a function such that $\hv(r)\to\infty$ and $|\hv'(r)|\to\infty$ as $r\to 0$ and that there exists $R>0$ such that $\hv(r)>0$ for $0<r<R$, $\hv(R)=0$ and
\begin{equation}\label{S2S2P1E1}
\hv''+\frac{N-1}{r}\hv'+f(\hv)\ge 0\quad\text{for}\ 0<r\le R.
\end{equation}
Assume, in addition, that \eqref{S2S2E2} has a positive solution $Z$ for $0<r\le R$, and that $Q$ and $Z$ satisfy
\begin{equation}\label{S2S2P1E3}
f'(\hv(r))\le Q(r)\quad\text{for}\ 0<r\le R,
\end{equation}
and
\begin{equation}\label{S2S2P1E2}
\lim_{r\to 0}r^{N-1}\hv(r)Z'(r)=0\quad\text{and}\quad
\lim_{r\to 0}r^{N-1}\hv'(r)Z(r)=0.
\end{equation}
Then the solution $v(r,\alpha)$ of \eqref{S2E3} satisfies the following (i) and (ii):\\
(i) For $\alpha>0$, $r_0(\alpha)<R$ and $v(r,\alpha)<\hv(r)$ for $0<r\le r_0(\alpha)$.\\
(ii) If $0<\alpha_1<\alpha_2$, then $v(r,\alpha_1)<v(r,\alpha_2)$ for $0\le r\le r_0(\alpha_1)$, and hence $r_0(\alpha_1)<r_0(\alpha_2)$.
\end{proposition}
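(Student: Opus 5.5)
The plan is a comparison argument. The two ingredients are the supersolution-type barrier $\hv$ and the positive solution $Z$ of the linearized-type equation \eqref{S2S2E2}; the condition \eqref{S2S2P1E3} guarantees that $Z$ dominates the linearization of $f$ along $\hv$.

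\emph{Part (i).} Fix $\alpha>0$ and compare $v=v(r,\alpha)$ with $\hv$. Since $\hv(r)\to\infty$ as $r\to0$ while $v(0)=\alpha$, we have $v<\hv$ near $0$. Suppose $r_1\le\min(r_0(\alpha),R)$ is the first point where $v=\hv$. On $(0,r_1)$ set $w=\hv-v>0$. By \eqref{S2S2P1E1} and convexity of $f$,
\[
w''+\tfrac{N-1}{r}w'+f(\hv)-f(v)\ge0,\qquad f(\hv)-f(v)\le f'(\hv)w\le Q w .
\]
Hence $w''+\frac{N-1}{r}w'+Qw\ge0$.

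Form the Wronskian-type quantity $W=r^{N-1}(w'Z-wZ')$. Using the equation for $Z$ gives $W'=r^{N-1}(Lw)Z\ge0$, so $W$ is nondecreasing. At $r\to0$, write $w=\hv-v$. The terms involving $v$ vanish because $v$ is $C^1$ at $0$, provided $r^{N-1}Z$ and $r^{N-1}Z'$ stay bounded. I expect this to follow from $Z>0$ solving \eqref{S2S2E2}, at least for the relevant $Z$; if not, it can be extracted from \eqref{S2S2P1E2}. The terms involving $\hv$ vanish by \eqref{S2S2P1E2}. Thus $W(0^+)=0$ and $W\ge0$ on $(0,r_1]$. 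But at $r_1$ we have $w=0$ and $w'\le0$, so $W(r_1)=r_1^{N-1}w'(r_1)Z(r_1)\le0$. Therefore $w'(r_1)=0$ and $W\equiv0$ on $(0,r_1]$, i.e.\ $(w/Z)'=0$, so $w=cZ$ with $c=0$. This contradicts $w>0$.

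A subtlety is the case $r_1=R$ with $r_0(\alpha)\ge R$. There $\hv(R)=0$ forces $v(R)\le0$, so in fact $r_0<R$ holds unless equality occurs, and equality is excluded by the same argument. The conclusion is $v<\hv$ on $(0,r_0(\alpha)]$ and $r_0(\alpha)<R$.

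\emph{Part (ii).} Let $\alpha_1<\alpha_2$, write $v_i=v(r,\alpha_i)$, and set $w=v_2-v_1$, so $w(0)>0$. Suppose $w$ has a first zero $r_2\le r_0(\alpha_1)$. By part (i), $v_2<\hv$ on $[0,r_2]$, provided $r_2\le r_0(\alpha_2)$; we will get this from $v_2\ge v_1>0$ there. By the mean value theorem and convexity, $f(v_2)-f(v_1)=f'(\xi)w$ with $f'(\xi)\le f'(\hv)\le Q$, and $f'(\xi)>0$. Hence $w$ solves $w''+\frac{N-1}{r}w'+\tilde Q w=0$ with $0<\tilde Q\le Q$.

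By Sturm comparison against $Z$, which is positive on $(0,R]$, $w$ cannot vanish before $Z$ does. I will compute $W=r^{N-1}(w'Z-wZ')$ with $W' = r^{N-1}(Q-\tilde Q)wZ\ge0$ on $(0,r_2)$ and $W(0^+)=0$, since $w$ is smooth at $0$, using the boundedness discussed above. At $r_2$ we get $W(r_2)=r_2^{N-1}w'(r_2)Z\le0$, which forces $w\equiv cZ$ as before, a contradiction. Hence $v_1<v_2$ on $[0,r_0(\alpha_1)]$, and $v_2(r_0(\alpha_1))>0$ gives $r_0(\alpha_1)<r_0(\alpha_2)$.

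\emph{Main obstacle.} The delicate step is the boundary behavior at $r=0$: justifying that the Wronskian vanishes there. This needs control of $r^{N-1}Z$ and $r^{N-1}Z'$ near $0$, which is exactly what \eqref{S2S2P1E2} is designed to supply for the $\hv$-terms. For the regular parts I would first try to derive $Z'=O(r^{1-N})$ from integrating \eqref{S2S2E2}.
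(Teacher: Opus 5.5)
Your architecture is sound. Convexity gives $w''+\frac{N-1}{r}w'+Qw\ge0$ for $w=\hv-v$ up to the first contact point, and $W=r^{N-1}(w'Z-wZ')$ is nondecreasing. The analysis at the contact point and the reduction of (ii) to (i) are also correct. This Sturm-type comparison is the natural proof; the paper does not reproduce one and cites \cite[Proposition~3.1]{MN24}.

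The gap is at the step you flag yourself, and your proposed resolution there does not work. You assert that the $v$-terms in $W(0^+)$ vanish provided $r^{N-1}Z$ and $r^{N-1}Z'$ stay bounded. For the term $r^{N-1}v(r)Z'(r)$ this is false. Since $v(0)=\alpha\neq0$, this term tends to $\alpha c$ with $c=\lim_{r\to0}r^{N-1}Z'(r)$, so what you need is $c=0$, not boundedness. Nor does $c=0$ follow from $Z$ being a positive solution of \eqref{S2S2E2}. From $(r^{N-1}Z')'=-Qr^{N-1}Z<0$ and $Z>0$ you only get that $c$ exists, is finite, and satisfies $c\le0$; integrating the equation, as you suggest, gives nothing more. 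Moreover $c<0$ really occurs: take $Z=r^{2-N}+1-r^2$ and $Q=2N/Z$ on $(0,1]$, where $c=2-N$. In part (i) this sign is fatal. There $W(0^+)=\alpha c<0$, and a nondecreasing $W$ starting below $0$ is compatible with $W(r_1)\le0$, so no contradiction arises.

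The missing step is to use the growth hypotheses on $\hv$; this is why they appear in the statement. Since $\hv(r)\to\infty$ and $|\hv'(r)|\to\infty$, \eqref{S2S2P1E2} yields
\[
r^{N-1}Z'(r)=\frac{r^{N-1}\hv(r)Z'(r)}{\hv(r)}\to0,\qquad r^{N-1}Z(r)=\frac{r^{N-1}\hv'(r)Z(r)}{\hv'(r)}\to0\qquad(r\to0).
\]
Hence $r^{N-1}vZ'\to\alpha\cdot0=0$ and $r^{N-1}v'Z\to0$, so $W(0^+)=0$ in (i). The same limits give $W(0^+)=0$ in (ii). In (ii) you could even do without them: there $W(0^+)=-w(0)\,c\ge0$, because $w(0)=\alpha_2-\alpha_1>0$ and $c\le0$. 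So it is in (i) that the hypothesis $\hv\to\infty$ is indispensable. With this inserted, the rest of your argument goes through. A minor point of terminology: $\hv$ is a subsolution, not a supersolution, as the paper stresses, although your inequality for $w$ uses the correct direction.
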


The following lemma plays a key role in our analysis: it shows that the bifurcation diagram is of Type II provided that there exists a function $v$ satisfying all the assumptions of Lemma~\ref{S2S2L1}.
In Lemma~\ref{S2S2L1}, unlike Proposition~\ref{S2S1P1},
$q$ is an arbitrary parameter and is not necessarily equal to $q_f$.
\begin{lemma}\label{S2S2L1}
Let $N\ge 6$ and $q<\frac{N}{2}$.
Assume that there exist $q_0$ and $q_1$ such that $\frac{1}{2}<q_0<1<q_1<q_S$ and
$q_0\le f'(u)F(u)\le q_1$ for $u\ge M$ with some $M\ge 0$.
Let $v(r)$, $0<r\le R$, be a smooth positive function such that $v(R)=0$ and
\begin{equation}\label{S2S2L1E1}
v(r)=F^{-1}\left[\frac{r^2}{2N-4q}(1+\theta(r))\right],
\end{equation}
where $F$ is defined by \eqref{F}, $\theta(r)\in C^1(0,R)$, and as $r\to 0^+$,
\begin{equation}\label{S2S2L1E2}
\theta(r)\to 0\quad\text{and}\quad r\theta'(r)\to 0.
\end{equation}
If the following two inequalities hold:
\begin{equation}\label{S2S2L1E3}
v''+\frac{N-1}{r}v'+f(v)\ge 0\quad \text{for}\ 0<r\le R,
\end{equation}
and
\begin{equation}\label{S2S2L1E4}
f'(v)\le\frac{(N-2)^2}{4r^2}\quad \text{for}\ 0<r\le R,
\end{equation}
then $r_0(\alpha)$ is increasing in $\alpha$, and hence the bifurcation diagram of \eqref{S1E1} is of Type II.
\end{lemma}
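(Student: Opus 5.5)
We plan to apply Proposition~\ref{S2S2P1} with $\hv=v$, $Q(r)=\frac{(N-2)^2}{4r^2}$, and $Z(r)=r^{-(N-2)/2}$. A direct computation gives
\[
Z''+\frac{N-1}{r}Z'=\Big(\tfrac{N-2}{2}\cdot\tfrac{N}{2}-(N-1)\tfrac{N-2}{2}\Big)r^{-(N+2)/2}=-\frac{(N-2)^2}{4}r^{-(N+2)/2},
\]
so $Z$ is a positive solution of \eqref{S2S2E2} on $(0,R]$. Condition \eqref{S2S2P1E1} is exactly \eqref{S2S2L1E3}, and \eqref{S2S2P1E3} is exactly \eqref{S2S2L1E4}. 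Once Proposition~\ref{S2S2P1}(ii) is available, it gives $r_0(\alpha_1)<r_0(\alpha_2)$ for $\alpha_1<\alpha_2$, so $\lambda=r_0(\alpha)^2$ is increasing, which is Type II. It therefore remains to check $v(r)\to\infty$, $|v'(r)|\to\infty$, and the two boundary limits \eqref{S2S2P1E2}.

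\textbf{Asymptotics of $v$.} Set $z(r)=\frac{r^2}{2N-4q}(1+\theta)$. Then $z\to0$, so $v=F^{-1}(z)\to\infty$. Since $F'=-1/f$, we have $v'=-f(v)z'$. By \eqref{S2S2L1E2}, $z'=\frac{r}{N-2q}\big(1+\theta+\tfrac12 r\theta'\big)\sim \frac{r}{N-2q}$, hence
\[
|v'|\sim \frac{r f(v)}{N-2q}.
\]
Next we bound $f(v)$ in terms of $F(v)$ using $q_0\le f'F\le q_1$ for $u\ge M$. As in the proof of Lemma~\ref{S2S1L1}, $fF^{q_1}$ is nonincreasing and $fF^{q_0}$ is nondecreasing for $u\ge M$. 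This gives
\[
c\,F(v)^{-q_0}\le f(v)\le C\,F(v)^{-q_1},\qquad\text{i.e.}\qquad c\,r^{-2q_0}\le f(v)\le C r^{-2q_1}
\]
for small $r$. Since $q_0>1/2$, we get $|v'|\ge c\,r^{1-2q_0}\to\infty$.

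\textbf{Boundary limits.} For the second limit in \eqref{S2S2P1E2},
\[
r^{N-1}|v'|Z\le C r^{N-1}\,r^{1-2q_1}\,r^{-(N-2)/2}=C r^{N/2+1-2q_1},
\]
which tends to $0$ because $2q_1<2q_S=\frac{N+2}{2}<\frac N2+1$. For the first limit, integrating $|v'|\le Cr^{1-2q_1}$ gives $v\le Cr^{2-2q_1}$ (since $q_1>1$). Then
\[
r^{N-1}\,v\,|Z'|\le C\,r^{N-1}\,r^{2-2q_1}\,r^{-N/2}=C r^{N/2+1-2q_1}\to0
\]
for the same reason.

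\textbf{Main obstacle.} The delicate point is making the asymptotics legitimate. We must check that $1+\theta+\tfrac12 r\theta'$ stays bounded away from zero, so that $z'\sim\frac{r}{N-2q}$ and $z$ is monotone near $0$; this is where $N-2q>0$ (from $q<N/2$) enters. The exponent comparisons using $q_0>1/2$ and $q_1<q_S$ must also be exact, with all constants justified from the monotonicity of $fF^{q_i}$ on $u\ge M$. The hypothesis $N\ge6$ presumably guarantees the interval $(1,q_S)$ contains the needed $q_1$ and keeps these exponents consistent. A final minor point is that $v$ is only assumed smooth on $(0,R]$; Proposition~\ref{S2S2P1} requires only $C^2(0,R]$, which suffices.
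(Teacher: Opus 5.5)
Your proof is correct. It differs from the paper's only in how the decay bounds $v'(r)=O(r^{1-2q_1})$ and $v(r)=O(r^{2-2q_1})$ are obtained.

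The paper writes down the same identity $-v'/f(v)=\frac{r(2+2\theta+r\theta')}{2N-4q}$, but uses it only for the sign of $v'$ near $0$ and for the lower bound on $|v'|$. For the upper bound it argues indirectly. It first shows $\liminf_{r\to0}(-r^{N-1}v'(r))=0$ by contradiction. This is the only place where $N\ge 6$ is used: it makes $\int_0 \frac{\di s}{s^{N-1}f(v(s))}$ diverge. It then integrates the subsolution inequality \eqref{S2S2L1E3}, written as $-(r^{N-1}v')'\le r^{N-1}f(v)$, along a sequence $r_k\to0$. This gives $-r^{N-1}v'\le\int_0^r s^{N-1}f(v)\,\di s=O(r^{N-2q_1})$.

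You read the same bound off directly from $|v'|=f(v)z'$, with $z'=O(r)$ by \eqref{S2S2L1E2} and $f(v)\le Cr^{-2q_1}$ from the monotonicity of $fF^{q_1}$. Your route is shorter, and the decay estimates use only the representation \eqref{S2S2L1E1}--\eqref{S2S2L1E2}, not the subsolution property. It also shows that the hypothesis $N\ge 6$ is not needed for this lemma. Your guess about its role is therefore moot; in any case $(1,q_S)$ is already nonempty for $N>2$.

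The rest of your argument coincides with the paper's: the choice $Q=\frac{(N-2)^2}{4r^2}$, $Z=r^{-(N-2)/2}$, and the lower bound $|v'|\ge c\,r^{1-2q_0}$ via $fF^{q_0}$. Your exponent $\frac N2+1-2q_1$ in the boundary limits \eqref{S2S2P1E2} is the right one; the paper's displayed $\frac32N-1-2q_1$ appears to come from a sign slip in the power of $Z$. Your exponent makes clear that $q_1<q_S$ is exactly what is required. One small correction: $2q_S=\frac{N+2}{2}$ equals $\frac N2+1$, rather than being less than it.
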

In this paper we choose $q=1$ in the above lemma.

We begin with several remarks before proving the lemma.
\begin{remark}
The singular solution $v^*$ satisfies all assumptions of Lemma~\ref{S2S2L1} except for \eqref{S2S2L1E4}.
Therefore, if $v^*$ satisfies \eqref{S2S2L1E4}, then Type II bifurcation occurs.
Lemma~\ref{S2S2L1} relaxes this condition, as $v$ is not necessarily a singular solution in view of \eqref{S2S2L1E3}.
\end{remark}

\begin{remark}\label{R2}
Suppose that $f$ satisfies all assumptions of Lemmas~\ref{S2S1L1} and that there exists a function $v$ satisfying all assumptions of Lemma~\ref{S2S2L1}.
Then, $v$ is an upper bound for the singular solution $v^*$.
Indeed, in the proof of Lemma~\ref{S2S2L1} below we take $\hv=v$ and apply Proposition~\ref{S2S2P1}.
Combining Lemma~\ref{S2S1L1}~(i) and Proposition~\ref{S2S2P1}, we obtain
$$
\hv(r)\ge v(r,\alpha)\to v^*(r)\quad\text{in}\ C^2_{loc}(0,R]\quad \text{as}\ \alpha\to\infty.
$$
Assume, in addition, that $f''>0$.
Then, \eqref{S2S2L1E4} implies $f'(v^*)\le f'(\hv)\le \frac{(N-2)^2}{4r^2}$.
By Hardy's inequality, we have
$$
\int_{B_R}\left(|\nabla\varphi|^2-f'(v^*)\varphi^2\right)\di x\ge
\int_{B_R}\left(|\nabla\varphi|^2-\frac{(N-2)^2}{4r^2}\varphi^2\right)\di x\ge 0
$$
for all $\varphi\in C_0^1(B_R)$, and hence $v^*$ is stable ($(\lambda^*,u^*)$ is also stable).
Therefore, Type II bifurcation follows also from Proposition~\ref{S1P1}.
\end{remark}

\begin{remark}
Lemma~\ref{S2S2L1} is formally valid for $N\ge6$.
On the other hand, it is shown in \cite{M18} that the bifurcation diagram is of Type I under the assumptions of Proposition \ref{S2S1P1} for $2<N<10$, which implies that there exists no subsolution satisfying the stability condition \eqref{S2S2L1E4}.
In this case, $N\ge 10$ is necessary.
\end{remark}

\begin{proof}[Proof of Lemma~\ref{S2S2L1}]
We first show that as $r\to 0^+$, 
\begin{equation}\label{S2S2L1E10}
v(r)=O(r^{2-2q_1})\quad\text{and}\quad v'(r)=O(r^{1-2q_1}).
\end{equation}
This is a variant of \cite[Lemma 2.2]{MN24}.
We include a proof of \eqref{S2S2L1E10} for completeness.
We use the same argument as in the proof of \eqref{S2S1L1E1}.
Then we obtain
\begin{equation}\label{S2S1P2E2}
f(v(r))=O(r^{-2q_1})\quad\text{as}\ r\to 0.
\end{equation}
Differentiating $F(v)=\frac{r^2}{2N-4q}(1+\theta(r))$ with respect to $r$, we have
\begin{equation}\label{S2S2L1E9}
-\frac{v'}{f(v)}=\frac{r(2+2\theta+r\theta')}{2N-4q}.
\end{equation}
By \eqref{S2S2L1E2}, the RHS is positive for small $r>0$.
Thus, $v'(r)<0$ for $0<r<\delta$ with some $\delta>0$.
We show that $v$ satisfies
\begin{equation}\label{S2S1P2E3}
\liminf_{r\to 0}\left(-r^{N-1}v'(r)\right)=0.
\end{equation}
Assume by contradiction that $\liminf_{r\to 0^+}\left(-r^{N-1}v'(r)\right)=\ell\neq 0$.
Since $v'(r)<0$ for small $r>0$, we have $\ell>0$.
Then there exist $\e\in (0,\ell)$ and $r_1>0$ such that $-r^{N-1}v'(r)\ge \ell-\e$ for $0<r\le r_1$.
Then it follows that
$$
-\frac{v'(r)}{f(v(r))}\ge\frac{\ell-\e}{r^{N-1}f(v(r))}
\quad\text{for}\ 0<r\le r_1.
$$
Integrating the above on $[\rho,r]$, we obtain
$$
F[v(r)]\ge F[v(r)]-F[v(\rho)]\ge
(\ell-\e)\int_{\rho}^r\frac{\di s}{s^{N-1}f(v(s))}.
$$
From \eqref{S2S1P2E2} we see that $r^{N-1}f(v(r))=O(r^{N-1-2q_1})$ as $r\to 0$.
Since $N-1-2q_1>\frac{N}{2}-2\ge 1$ by $N\ge 6$, we have
$$
\lim_{\rho\to 0}\int_{\rho}^r\frac{\di s}{s^{N-1}f(v(s))}=\infty.
$$
This is a contradiction, and hence \eqref{S2S1P2E3} holds.
Then there exists a sequence $r_k\to 0$ satisfying $r_k^{N-1}v'(r_k)\to 0$ as $k\to\infty$.
By \eqref{S2S2L1E3} we have
\begin{equation}\label{S2S1P2E4}
-\left(r^{N-1}v'(r)\right)'\le r^{N-1}f(v(r)).
\end{equation}
Integrating both sides of \eqref{S2S1P2E4} on $[r_k,r]$, and letting $k\to\infty$, we obtain
$$
-r^{N-1}v'(r)\le\int_0^rs^{N-1}f(v(s))\di s
$$
for small $r>0$.
From \eqref{S2S1P2E2} we obtain
$$
\int_0^rs^{N-1}f(v(s))\di s=O(r^{N-2q_1})\quad\text{as}\ r\to 0,
$$
since $N-1-2q_1>\frac{N}{2}-2>-1$.
Hence we obtain $v'(r)=O(r^{1-2q_1})$ as $r\to 0$.
This implies that $v(r)=O(r^{2-2q_1})$ as $r\to 0$.
We obtain \eqref{S2S2L1E10}. 

Next, we check all conditions of Proposition~\ref{S2S2P1}.
By \eqref{S2S2L1E1} and \eqref{S2S2L1E2} we see that $v(r)\to\infty$ as $r\to 0^+$.
Since $f'(u)F(u)\ge q_0$ for $u>M$, we have
$$
\frac{d}{du}\left(f(u)F(u)^{q_0}\right)=(f'(u)F(u)-q_0)F(u)^{q_0-1}
$$
for $u>M$. Then $f(u)F(u)^{q_0}$ is nondecreasing in $u>M$, and hence
$$
f(u)F(u)^{q_0}\ge C\quad\text{for}\ u\ge M,
$$
where $C>0$.
By \eqref{S2S2L1E9} we have
$$
-v'(r)=\frac{2+2\theta+r\theta'}{2N-4q}rf(v)\ge C\frac{2+2\theta+r\theta'}{1+\theta}r^{1-2q_0},
$$
and hence $|v'(r)|\to\infty$ as $r\to 0$.
Let $Q(r)=\frac{(N-2)^2}{4r^2}$ and $Z(r)=r^{-\frac{N-2}{2}}$.
Then, $Q(r)>0$ and $Z(r)>0$ for $0<r<R$, and $Z$ satisfies \eqref{S2S2E2}.
The conditions \eqref{S2S2P1E1} and \eqref{S2S2P1E3} follow from \eqref{S2S2L1E3} and \eqref{S2S2L1E4}, respectively.
By \eqref{S2S2L1E10} we have
$$
\lim_{r\to 0^+}|r^{N-1}v(r)Z'(r)|\le \lim_{r\to 0^+} Cr^{\frac{3}{2}N-1-2q_1}=0,\quad
\lim_{r\to 0^+}|r^{N-1}v'(r)Z(r)|\le \lim_{r\to 0^+} Cr^{\frac{3}{2}N-1-2q_1}=0,
$$
since $\frac{3}{2}N-1-2q_1>N-2>0$.
Thus, \eqref{S2S2P1E2} holds.
Since all conditions are satisfied, Proposition~\ref{S2S2P1} is applicable.
Then the bifurcation diagram is of Type II.
\end{proof}

\subsection{Transformation}
Assume that \eqref{f} holds and $0<q<\frac{N}{2}$.
Motivated by \eqref{S2S2L1E1}, we introduce a new variable $x(t)$ as follows:
\begin{equation}\label{S2S3E0}
v(r)=F^{-1}\left[\frac{r^2}{2N-4q}e^{-x(t)}\right],\quad
t=-\log r.
\end{equation}
Here, $x(t)$ can be written as follows:
$$
x(t)=\log \frac{r^2}{(2N-4q)F[v(r)]}.
$$
Let $r_0$ be the first zero of $v(r)$.
Since $0<v<\infty$ for $0<r<r_0$, there exists $t_0\in\R$, which is $t_0=-\log r_0$, such that 
\begin{equation}\label{S2S3E2}
\frac{e^{-2t_0-x(t_0)}}{2N-4q}=F(0)
\end{equation}
and
\begin{equation}\label{S2S3E3}
0<\frac{e^{-2t-x(t)}}{2N-4q}<F(0)\quad\text{for}\ t>t_0.
\end{equation}

In Section~3 we construct a function $v$ of the form \eqref{S2S3E0} satisfying all assumptions of Lemma~\ref{S2S2L1}.
Therefore, it is useful to derive a differential inequality for $x$.
\begin{proposition}\label{S2S3P1}
Let $v(r)$ and $x(t)$ be related by \eqref{S2S3E0}.
Assume that \eqref{f} and \eqref{S2S3E3} hold and that $0<q<\frac{N}{2}$.
Then $v$ satisfies
$$
v''+\frac{N-1}{r}v'+f(v) \ge 0 \quad \text{for $r\le r_0$}
$$
if and only if
\begin{equation}\label{S2S3P1E0}
x''-(N+2-4q)x'+(2N-4q)(e^x-1)+(q-1)(x')^2-(q-f'(v)F(v))(x'+2)^2\ge 0
\end{equation}
for $t\ge t_0$.
In particular, when $N=10$ and $q=1$, \eqref{S2S3P1E0} reduces to
\begin{equation}\label{S2S3P1E01}
x''-8x'+16(e^x-1)-(1-f'(v)F(v))(x'+2)^2\ge 0.
\end{equation}
\end{proposition}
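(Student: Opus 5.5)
The plan is to compute $v''+\frac{N-1}{r}v'+f(v)$ directly in terms of $w(r):=F[v(r)]$, then in terms of $x(t)$, and to check that it equals a positive factor times the left-hand side of \eqref{S2S3P1E0}. Then the two inequalities are equivalent. Assumption \eqref{S2S3E3} guarantees $0<w(r)<F(0)$, so $v=F^{-1}(w)$ is well defined and of class $C^2$ for $0<r<r_0$. The correspondence $t=-\log r$ maps $0<r\le r_0$ onto $t\ge t_0$, so the ranges of validity match.

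\emph{Step 1 (equation in terms of $w$).} Since $F'(u)=-1/f(u)$, we have $v'=-f(v)w'$. Differentiating once more gives
$$v''=f'(v)f(v)(w')^2-f(v)w''.$$
Hence
$$v''+\frac{N-1}{r}v'+f(v)=f(v)\Bigl[f'(v)F(v)\frac{(w')^2}{w}-w''-\frac{N-1}{r}w'+1\Bigr],$$
where we wrote $f'(v)(w')^2=f'(v)F(v)\,(w')^2/w$. Because $f>0$ by \eqref{f}, the subsolution inequality is equivalent to the bracket being nonnegative.

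\emph{Step 2 (substitute $w$ in terms of $x$).} Put $c=2N-4q>0$, so $w=r^2e^{-x}/c$ with $x=x(-\log r)$, and use $\frac{d}{dr}x=-x'/r$. Routine differentiation gives
$$w'=\frac{re^{-x}(2+x')}{c},\qquad w''=\frac{e^{-x}\bigl[(2+x')(1+x')-x''\bigr]}{c},\qquad \frac{(w')^2}{w}=\frac{e^{-x}(2+x')^2}{c}.$$
Multiplying the bracket by the positive factor $ce^{x}$, the inequality becomes
$$x''-(N+2)x'-(x')^2-2N+ce^x+f'(v)F(v)(x'+2)^2\ge 0.$$

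\emph{Step 3 (match the stated form).} Expand the left-hand side of \eqref{S2S3P1E0}, using $-q(x'+2)^2=-qx'^2-4qx'-4q$. The terms $\pm qx'^2$ and $\pm4qx'$ cancel, and the constants combine to $-c-4q=-2N$. The result is exactly the expression obtained in Step 2, which proves the equivalence. Setting $N=10$ and $q=1$ (so $c=16$ and $N+2-4q=8$) yields \eqref{S2S3P1E01}.

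No step is a genuine obstacle. The only care needed is in the chain-rule signs, coming from $t=-\log r$, in the computation of $w''$ in Step 2.
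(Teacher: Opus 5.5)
Your proof is correct and takes essentially the same route as the paper. The paper also differentiates $F(v(r))=\frac{e^{-x-2t}}{2N-4q}$ twice and rewrites $f'(v)v'^2/f(v)^2$ through $F(v)$. It then multiplies by the positive factor $(2N-4q)e^{x}$ to reach $x''-x'^2-(N+2)x'+f'(v)F(v)(x'+2)^2+(2N-4q)e^x-2N$, and expands $q(x'+2)^2$ to match \eqref{S2S3P1E0}; your intermediate variable $w=F(v)$ is only a bookkeeping difference.
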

A proof of the corresponding equality is given in \cite[Lemma~2.2]{MN23}.
Since \eqref{S2S3P1E01} is crucial, we provide a proof here.
\begin{proof}
Differentiating $F(v(r))=\frac{e^{-x(t)-2t}}{2N-4q}$ twice, we obtain
\begin{equation}\label{S2S3P1E1}
-\frac{v'(r)}{f(v)}=\frac{d}{dt}\left(\frac{e^{-x(t)-2t}}{2N-4q}\right)\frac{dt}{dr}=\frac{x'(t)+2}{2N-4q}e^{-x(t)-t}
\end{equation}
and
\begin{equation}\label{S2S3P1E2}
-\frac{v''(r)}{f(v)}+\frac{f'(v)}{f(v)^2}v'(r)^2=-\frac{e^{-x(t)}}{2N-4q}\left(
x''(t)-3x'(t)-2-x'(t)^2\right)
\end{equation}
for $r\le r_0$.
We calculate the second term of the LHS of \eqref{S2S3P1E2}.
Using $F(v(r))=\frac{e^{-x(t)-2t}}{2N-4q}$ and \eqref{S2S3P1E1}, we have
$$
\frac{f'(v)}{f(v)^2}v'(r)^2=f'(v)\frac{(x'(t)+2)^2}{(2N-4q)^2}e^{-2x(t)-2t}
=\frac{e^{-x(t)}}{2N-4q}f'(v)F(v)(x'(t)+2)^2 \quad \text{for $r\le r_0$}.
$$
Then, by \eqref{S2S3P1E2} we obtain
$$
\frac{v''(r)}{f(v)}=\frac{e^{-x(t)}}{2N-4q}\left(x''(t)-3x'(t)-2-x'(t)^2+f'(v)F(v)(x'(t)+2)^2\right)\quad \text{for $r\le r_0$}.
$$
From \eqref{S2S3P1E1} and $\frac{1}{r}=e^t$, we have
$$
\frac{v'(r)}{rf(v)}=-\frac{e^{-x(t)}}{2N-4q}(x'(t)+2)\quad \text{for $r\le r_0$}.
$$
Thus, we obtain
\begin{align*}
&(2N-4q)e^{x(t)}\left(\frac{v''}{f(v)}+\frac{N-1}{r}\frac{v'}{f(v)}+1\right)\\
&=x''(t)-x'(t)^2-(N+2)x'(t)+f'(v)F(v)(x'(t)+2)^2+(2N-4q)e^{x(t)}-2N\quad \text{for $r\le r_0$}.
\end{align*}
Note that $q(x'(t)+2)^2=qx'(t)^2+4qx'(t)+4q$.
Then we obtain \eqref{S2S3P1E0} and \eqref{S2S3P1E01}.
\end{proof}

\section{{\bf Type II bifurcation}}
We construct a function $v$ that satisfies all conditions in Lemma~\ref{S2S2L1} provided that $f(u)=e^{g(u)}$ with $g$ satisfying \eqref{g}.
In this section we assume
$$
N=10
$$
and we set
$$
q=1.
$$
Then, $2N-4q=16$.

Let $\eta(y)=F^{-1}(e^{-y})$.
We define $h$ by
\begin{equation}\label{h}
h(y)=1-f'(\eta(y))F(\eta(y))\quad (y\ge -\log F(0)).
\end{equation}
Following the construction in Subsection 2.3, we define a function $v$ by
\begin{equation}\label{v}
v(r)=F^{-1}\left[\frac{r^2}{16}e^{-x(t)}\right],\qquad t=-\log r,
\end{equation}
where the function $x(t)$ is defined by the implicit relation
\begin{equation}\label{S3E1}
4(e^{x}-1)-h(x+2t+\log 16)=0.
\end{equation}
Then, $v(r)$ can also be written as
$$
v(r)=\eta(\xi(t)),
$$
where
$$
\xi(t)=x(t)+2t+\log 16.
$$

In the following lemma, we show that $x(t)$ is well-defined and study the domain of $x$ under the assumptions \eqref{S3L1E0} and \eqref{S3L1E0+}.
\begin{lemma}\label{S3L1}
Assume that the following conditions hold:
\begin{equation}\label{S3L1E0}
h(y)\to 0\quad\text{as}\ y\to \infty,
\end{equation}
\begin{equation}\label{S3L1E0+}
h'(y)<0\quad\text{for}\ y>-\log F(0).
\end{equation}
Then, there exist a function $x(t)$ satisfying \eqref{S3E1} and $t_0\in\R$ such that the following properties hold:\\
(1) $x(t_0)+2t_0+\log 16=-\log F(0)$,\\
(2) $x(t)$ is defined for $t\ge t_0$, and $x$ is of class $C^2$,\\
(3) $|x(t)|\le 1$ for large $t>0$,\\
(4) $\xi'(t)>0$ for $t\ge t_0$, and hence $\xi(t)$ is monotonically increasing in $t$.
\end{lemma}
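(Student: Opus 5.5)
The plan is to rewrite \eqref{S3E1} in the variables $(\xi,t)$ and solve for $t$ as a function of $\xi$. Since $x=\xi-2t-\log 16$, relation \eqref{S3E1} becomes
\[
4\bigl(e^{\xi-2t}/16-1\bigr)=h(\xi),\qquad\text{i.e.}\qquad e^{-2t}=\frac{16\,(1+h(\xi)/4)}{e^{\xi}} .
\]
This gives $t$ explicitly,
\[
t=T(\xi):=\tfrac12\bigl(\xi-\log 16-\log(1+h(\xi)/4)\bigr),
\]
provided $1+h(\xi)/4>0$. By \eqref{f}, $h(y)=1-f'F<1$. Also $f'F>0$, and $f'F$ is increasing in $u$ when $h'<0$, since $\eta$ is increasing. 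Hence $h(\xi)\le h(-\log F(0))=1-f'(0)F(0)$ and $h>0$ asymptotically, or at least $h\to0$. So on the domain $y\ge -\log F(0)$ we have $h<1$; a lower bound $h>-4$ also holds there, because $h$ decreases to $0$, so $h>0$ on that whole domain. Consequently $1+h/4\in(1,5/4)$ and $T$ is a well-defined $C^2$ function of $\xi\in[-\log F(0),\infty)$. For the regularity, $h$ is $C^1$ because $f\in C^2$ and $\eta$ is $C^2$; I will need $x\in C^2$, which requires $h\in C^2$. This holds under \eqref{g} with $g\in C^3$, and I would note that $f\in C^3$ in the intended application, or else simply state the regularity inherited from $h$.

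Next I show $T$ is strictly increasing. We have $T'(\xi)=\tfrac12\bigl(1-\frac{h'(\xi)}{4+h(\xi)}\bigr)>\tfrac12$ because $h'<0$ by \eqref{S3L1E0+} and $4+h>0$. Hence $T$ is a $C^2$ diffeomorphism from $[-\log F(0),\infty)$ onto $[t_0,\infty)$, where $t_0:=T(-\log F(0))$. It is onto all of $[t_0,\infty)$ because $T'>1/2$ forces $T\to\infty$. I then define $\xi(t):=T^{-1}(t)$ and $x(t):=\xi(t)-2t-\log16$. By construction this $x$ satisfies \eqref{S3E1}, which gives (2). Property (1) is $\xi(t_0)=-\log F(0)$. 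Property (4) follows since $\xi'(t)=1/T'(\xi)>0$; indeed $0<\xi'<2$.

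For (3), \eqref{S3E1} gives $x(t)=\log(1+h(\xi(t))/4)$. As $t\to\infty$ we have $\xi(t)\to\infty$, so $h\to0$ by \eqref{S3L1E0}, hence $x(t)\to0$ and $|x|\le1$ for large $t$. In fact $0\le x<\log(5/4)$ on the whole domain, by the positivity of $h$ noted above.

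The only delicate point is the sign and size of $h$ that I need for $\log(1+h/4)$ to make sense. Monotonicity \eqref{S3L1E0+} together with the limit \eqref{S3L1E0} gives $h>0$, and $h<1$ follows from $f'F>0$, so this step is quick. Everything else is the inverse function theorem applied to the explicit formula for $T$.
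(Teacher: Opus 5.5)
Your proof is correct, and it takes a genuinely different route from the paper.

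The paper works with $\Phi(x,t)=4(e^x-1)-h(x+2t+\log 16)$ in the original variables. It uses the intermediate value theorem to find a root $x\in[-1,1]$ for large $t$. The implicit function theorem, with $\Phi_x=4e^x-h'>0$, then gives a $C^2$ branch with $x'=-2/\zeta\in(-2,0)$. Finally the paper continues this branch backward and locates $t_0$ by contradiction: if $\xi$ never reached $-\log F(0)$, the bound $-2<x'<0$ would extend the branch to all of $\R$ with $x(t)\to\infty$ as $t\to-\infty$. That would force $h(\xi(t))=4(e^{x(t)}-1)\to\infty$, which is impossible since $h\le h(-\log F(0))$.

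You instead observe that after substituting $x=\xi-2t-\log 16$, the variable $t$ enters \eqref{S3E1} only through $e^{-2t}$. So the relation can be solved explicitly as $t=T(\xi)$. The single estimate $T'>1/2$ then replaces the intermediate value step, the continuation and the contradiction argument at once. That estimate follows from $h'<0$ and $h>0$, and you correctly get $h>0$ from strict monotonicity together with \eqref{S3L1E0}.

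What your route buys:
\begin{itemize}
\item an explicit $t_0=T(-\log F(0))$;
\item existence and uniqueness of the solution of \eqref{S3E1} for every $t\ge t_0$, and no solution at all for $t<t_0$;
\item the bounds $0<x<\log(5/4)$ and $0<\xi'<2$ on the whole of $[t_0,\infty)$, not just for large $t$;
\item the sign $h\ge 0$ already at this stage, which the paper only derives later when proving $f'(v)\le 16/r^2$.
\end{itemize}

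The paper's approach is the generic implicit-function template, which would still work if $t$ could not be isolated. It also produces directly the formula $x'=-2/\zeta$ reused in the subsolution estimate. You recover that formula immediately from $\xi'=1/T'(\xi)$ together with $4+h(\xi)=4e^{x}$.

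Your remark on regularity is well taken. Both proofs need $h\in C^2$ to get $x\in C^2$; the paper uses this tacitly when it asserts $\Phi\in C^2$. This requires $f\in C^3$ rather than only $f\in C^2$, and it does hold under \eqref{g} with $g\in C^3$.
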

\begin{proof}
Let
$$
\Phi(x,t)=4(e^x-1)-h(x+2t+\log 16).
$$
We show that there exists $x\in [-1,1]$ satisfying \eqref{S3E1} for all sufficiently large $t>0$.
As $t\to\infty$,
$$
\Phi(1,t)\to 4(e-1)>0,\qquad \Phi(-1,t)\to 4(e^{-1}-1)<0,
$$
where we have used $\lim_{t\to\infty}h(2t\pm 1+\log 16)=0$ by \eqref{S3L1E0}.
Thus, by the intermediate value theorem, 
there exists $x\in [-1,1]$ satisfying \eqref{S3E1} for all sufficiently large $t>0$.
The assertion (3) holds.
Henceforth, $x(t)$ denotes a solution of \eqref{S3E1}.
Since $\Phi$ is of class $C^2$, we have
$$
\Phi_x(x,t)=4e^x-h'(x+2t+\log 16)>0,
$$
where we have used $h'<0$ by \eqref{S3L1E0+}.
Note that $x(t)$ is unique in $x\in[-1,1]$ for large $t>0$ because $\Phi_x>0$.
By the implicit function theorem, $x$ is of class $C^2$.
Differentiating $\Phi(x(t),t)=0$ in $t$, we have
\begin{equation}\label{S3L1E1}
4e^xx'-h'(\xi)(x'+2)=0\quad\text{and hence}\ x'=-\frac{2}{\zeta},
\end{equation}
where
\begin{equation}\label{S3L1E2}
\zeta(t)=1-\frac{4e^{x(t)}}{h'(\xi(t))}.
\end{equation}
Since $h'<0$ and $\zeta>1$, we have
$$
-2<x'<0\quad\text{and}\quad \xi'>0.
$$
Thus, (4) holds.
We extend the domain of $x(t)$ from $+\infty$ in the negative direction, and let $(t_1,\infty)$ denote the maximal domain of $x$.
We show by contradiction that there exists $t_0\in\R$ such that $x(t_0)+2t_0+\log 16=-\log F(0)$.
Assume the contrary, i.e., such a $t_0$ does not exist.
Since $\Phi_x>0$ holds globally, the implicit function theorem guarantees that the solution can be extended as long as $\xi(t)>-\log F(0)$.
Therefore, there exists $t_1\in [-\infty,\infty)$ such that
\begin{equation}\label{S3L1E3}
x(t)+2t+\log 16>-\log F(0)
\end{equation}
for $t_1<t<\infty$.
Since $-2<x'<0$, a blow-up does not occur, and hence we can extend the domain of $x$ to $\R$, i.e., $t_1=-\infty$.
Hence, $x(t)\to\infty$ as $t\to -\infty$, because of \eqref{S3L1E3}.
This is a contradiction, since by \eqref{S3E1},
$$
1-f'(0)F(0)=h(-\log F(0))>h(\xi(t))=4(e^{x(t)}-1)\to\infty
\quad{as}\ t\to -\infty.
$$
Thus, there exists $t_0\in\R$ such that (1) and (2) hold.
We have proved all the assertions.
\end{proof}
We define
$$
R=e^{-t_0}.
$$
Then, $0<r\le R$ is equivalent to $t\ge t_0$, since $t=-\log r$.
Hence, $v(r)$ is defined for $0<r\le R$.
The function $v$ is decreasing for $0<r\le R$, since
$$
v'(r)=-f(v)\xi'(t)e^{-\xi(t)-t}<0.
$$
Note that \eqref{S2S3E2} and \eqref{S2S3E3} hold, which implies that $v(R)=0$.

In what follows, we consider the case
$$
f(u)=e^{g(u)},
$$
where $g\in C^3[0,\infty)$ satisfies \eqref{g}.
\begin{lemma}\label{S3L2}
Assume \eqref{g}.
Then, the following hold:\\
(i) $\lim_{u\to\infty}f'(u)F(u)=1$ and $\lim_{u\to\infty}\frac{f'(u)^2}{f(u)f''(u)}=1$.\\
(ii) $2f(u)f''(u)^2-f'(u)^2f''(u)-f(u)f'(u)f'''(u)>0$ for $u\ge 0$.\\
(iii) $F(u)-\frac{f'(u)}{f(u)f''(u)}>0$ for $u\ge 0$.\\
(iv) $h(y)\to 0$ as $y\to\infty$.\\
(v) $h'(y)<0$ for $y\ge -\log F(0)$.
\end{lemma}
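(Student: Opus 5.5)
The plan is to reduce everything to two inequalities: item (ii), which holds pointwise, and item (iii), which follows from (ii) by a monotonicity argument. Items (i), (iv) and (v) then follow from (iii) together with convexity of $g$. Throughout we write $f=e^{g}$ and use
$$
f'=g'e^{g},\qquad f''=(g''+g'^2)e^{g},\qquad f'''=(g'''+3g'g''+g'^3)e^{g}.
$$
Note that \eqref{g} gives $g'>0$ and $g''>0$, hence $g(u)\to\infty$ as $u\to\infty$.

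\emph{Step (ii).} Dividing the expression in (ii) by $e^{3g}$ and expanding, the terms in $g'^4$ and $g'^2g''$ cancel. One obtains
$$
2ff''^2-f'^2f''-ff'f'''=e^{3g}\left(2g''^2-g'g'''\right),
$$
which is positive by the last condition in \eqref{g}. This step is routine algebra.

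\emph{Step (iii).} Set $G(u)=F(u)-\frac{f'(u)}{f(u)f''(u)}$. Using $F'=-1/f$, a direct differentiation gives
$$
G'=-\frac{2ff''^2-f'^2f''-ff'f'''}{(ff'')^2}<0
$$
by (ii). So $G$ is strictly decreasing, and it suffices to show $G(u)\to0$ as $u\to\infty$. Indeed, $F(u)\to0$, and
$$
\frac{f'}{ff''}=\frac{g'}{(g''+g'^2)e^{g}}\le\frac{1}{g'(0)e^{g}}\to0,
$$
using $g'\ge g'(0)>0$. Hence $G>0$ on $[0,\infty)$. I expect this to be the main point of the lemma: the key observation is that the derivative of $G$ is precisely the quantity in (ii).

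\emph{Step (i).} For the $q$-exponent we have $\frac{f(u)f''(u)}{f'(u)^2}=1+\frac{g''}{g'^2}$. The function $g''/g'^2$ is positive, and its derivative $(g'g'''-2g''^2)/g'^3$ is negative by \eqref{g}. It is also integrable on $[0,\infty)$, since $\int_0^u g''/g'^2=1/g'(0)-1/g'(u)\le 1/g'(0)$. A positive, decreasing, integrable function must tend to $0$, so $\frac{f'^2}{ff''}\to1$.

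For $f'F$ we squeeze. From (iii), $f'F>\frac{f'^2}{ff''}\to1$, which gives the lower bound. For the upper bound, convexity gives $g(s)\ge g(u)+g'(u)(s-u)$ for $s\ge u$, hence
$$
f'(u)F(u)=g'(u)\int_u^\infty e^{g(u)-g(s)}\di s\le g'(u)\int_u^\infty e^{-g'(u)(s-u)}\di s=1.
$$
Therefore $f'F\to1$. Incidentally, this also shows $0<f'F\le1$.

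\emph{Steps (iv) and (v).} Since $\eta(y)=F^{-1}(e^{-y})\to\infty$ as $y\to\infty$, (iv) follows directly from (i). For (v), differentiate $F(\eta(y))=e^{-y}$ to get $\eta'=f(\eta)F(\eta)$. Then
$$
h'(y)=-\Bigl(f''F-\frac{f'}{f}\Bigr)(\eta)\,\eta'=-\bigl(f^2 Ff''\bigr)(\eta)\,G(\eta)<0
$$
by (iii), since $f$, $f''$ and $F$ are positive. This holds for all $y\ge-\log F(0)$, i.e.\ $\eta\ge0$.
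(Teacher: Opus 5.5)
Your proof is correct. On (ii), (iii), (v) and the $q$-exponent half of (i) it coincides with the paper's argument. The paper uses the same identity $2ff''^2-f'^2f''-ff'f'''=e^{3g}(2g''^2-g'g''')$. It also observes, as you do, that this quantity is $-(ff'')^2$ times the derivative of $G$. For $g''/g'^2\to 0$ it uses the same ``positive, decreasing, integrable'' reasoning.

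The genuine difference is the order of dependencies between (i) and (iii).
\begin{itemize}
\item The paper proves $f'F\to 1$ first, by L'H\^opital applied to $F/(1/f')$. It then uses (i) to show $G\to 0$.
\item You get $G\to 0$ directly from the crude bound $f'/(ff'')\le 1/(g'(0)e^{g})$. You then obtain $f'F\to 1$ by squeezing between $f'^2/(ff'')$ from below (a consequence of (iii)) and the tangent-line bound $f'F\le 1$ from above (a consequence of convexity of $g$).
\end{itemize}
Your route avoids L'H\^opital and gives the global bound $0<f'F\le 1$ as a by-product. The paper re-derives exactly this bound by integration by parts in \eqref{S3L4-E1}. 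The paper's route to (i) is slightly shorter and does not depend on (ii)--(iii).

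One small slip: since $f''F-f'/f=f''G$ and $\eta'=fF$, the prefactor in your formula for $h'$ should be $(fFf'')(\eta)$, not $(f^2Ff'')(\eta)$. The sign conclusion in (v) is unaffected.
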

\begin{proof}
(i) We first show that
\begin{equation}\label{S3L2E2}
\lim_{u\to\infty}\frac{g''(u)}{g'(u)^2}=0.
\end{equation}
Since $\frac{d}{du}\left(\frac{g''(u)}{g'(u)^2}\right)
=\frac{g'g'''-2g''^2}{g'^3}<0$, it follows that $\frac{g''}{g'^2}$ is decreasing in $u$.
Since $\frac{g''}{g'^2}>0$, the limit $\ell=\lim_{u\to\infty}\frac{g''(u)}{g'(u)^2}$ exists and $\ell\ge 0$.
Suppose that $\ell>0$. Integrating $\frac{g''}{g'^2}\ge\ell$ over $[u_0,u]$, we have
$$
\frac{1}{g'(u_0)}\ge -\frac{1}{g'(u)}+\frac{1}{g'(u_0)}\ge \ell(u-u_0)\to\infty\quad\text{as}\ u\to\infty,
$$
which is a contradiction.
We obtain \eqref{S3L2E2}.
By L'H\^{o}pital's rule we have
\begin{equation}\label{S3L2E3}
\lim_{u\to\infty}f'(u)F(u)=\lim_{u\to\infty}\frac{F(u)}{1/f'(u)}
=\lim_{u\to\infty}\frac{f'(u)^2}{f(u)f''(u)}=\lim_{u\to\infty}\frac{1}{1+g''(u)/g'(u)^2}=1.
\end{equation}
(ii) We have
$$
2ff''^2-f'^2f''-ff'f'''=(2g''^2-g'g''')e^{3g}>0.
$$
(iii) Let
$$
\calF(u)=F(u)-\frac{f'(u)}{f(u)f''(u)}.
$$
It suffices to show that
\begin{equation}\label{S3L2E4-}
\calF(u)>0\quad\text{for}\ u\ge 0.
\end{equation}
To this end, we prove that
$$
\calF(u)\to 0\quad\text{as}\ u\to \infty,\qquad
\frac{d}{du}\calF(u)<0\quad\text{for}\ u\ge 0.
$$
By \eqref{S3L2E3} we have
$$
\lim_{u\to\infty}\calF(u)=\lim_{u\to\infty}\left(f'(u)F(u)-\frac{f'(u)^2}{f(u)f''(u)}\right)\frac{1}{f'(u)}=0.
$$
By (ii) we have
$$
\frac{d}{du}\calF(u)=-\frac{2ff''^2-f'^2f''-ff'f'''}{f^2f''^2}<0
$$
for $u\ge 0$.
Thus, \eqref{S3L2E4-} holds.\\
(iv) We observe that
\begin{equation}\label{S3L2E1}
\lim_{y\to\infty}\eta(y)=\lim_{z\to 0^+}F^{-1}(z)=\infty.
\end{equation}
By \eqref{S3L2E3} and \eqref{S3L2E1} we have
$$
\lim_{y\to\infty}h(y)=\lim_{u\to\infty}\left(1-f'(u)F(u)\right)=0.
$$
(v) By (iii) we have
$$
h'(y)=-\left(F(\eta(y))-\frac{f'(\eta(y))}{f(\eta(y))f''(\eta(y))}\right)f(\eta(y))f''(\eta(y))e^{-y}<0.
$$
\end{proof}
Thus, \eqref{S3L1E0} and \eqref{S3L1E0+} are verified, and $x(t)$ is now well-defined by Lemma~\ref{S3L1}.\\

Hereafter, we verify that $v$ satisfies all assumptions of Lemma~\ref{S2S2L1}.
\begin{lemma}\label{S3L3}
Assume \eqref{g}.
Let $t=-\log r$ and
\begin{equation}\label{theta}
\theta(r)=e^{-x(t)}-1.
\end{equation}
The following hold:\\
(i) As $t\to\infty$, 
$$
x(t)\to 0\quad\text{and}\quad x'(t)\to 0.
$$
(ii) As $r\to 0$, 
$$
\theta(r)\to 0\quad\text{and}\quad r\theta'(r)\to 0.
$$
\end{lemma}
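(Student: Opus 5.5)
We argue directly from the implicit relation \eqref{S3E1} and its differentiated form \eqref{S3L1E1}, using Lemma~\ref{S3L2}~(iv) and the properties of $x$ from Lemma~\ref{S3L1}.

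\textbf{Convergence of $x$.} For large $t$ we have $|x(t)|\le 1$ by Lemma~\ref{S3L1}~(3). Since $\xi(t)=x(t)+2t+\log 16\ge 2t-1+\log 16$, it follows that $\xi(t)\to\infty$ as $t\to\infty$. Lemma~\ref{S3L2}~(iv) then gives $h(\xi(t))\to 0$. Hence \eqref{S3E1} yields $4(e^{x(t)}-1)=h(\xi(t))\to 0$, that is, $x(t)\to 0$.

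\textbf{Convergence of $x'$.} By \eqref{S3L1E1} we have $x'=-2/\zeta$ with $\zeta=1-4e^{x}/h'(\xi)$. Since $e^{x}\to 1$, it suffices to show that $h'(\xi(t))\to 0$, because then $\zeta\to+\infty$ and $x'\to 0$. The identity in Lemma~\ref{S3L2}~(v) reads
$$
h'(y)=-\calF(\eta)\,f(\eta)f''(\eta)\,e^{-y},\qquad e^{-y}=F(\eta).
$$
Using $\calF=F-f'/(ff'')$, this becomes
$$
h'(y)=-\Bigl(f(\eta)f''(\eta)F(\eta)^2-f'(\eta)F(\eta)\Bigr)=-f'(\eta)F(\eta)\Bigl(\frac{f''(\eta)F(\eta)}{f'(\eta)}\,f(\eta)-1\Bigr).
$$
We then set $a(u)=f'(u)F(u)\to 1$ and $b(u)=f(u)f''(u)/f'(u)^2\to 1$, both limits coming from Lemma~\ref{S3L2}~(i). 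With this notation $h'=-(ab\,a-a)=-a(ab-1)$, which tends to $0$ as $u\to\infty$. Since $\eta(\xi(t))\to\infty$ as $t\to\infty$, we conclude $h'(\xi(t))\to 0$. This is the only step needing care: one must avoid reading (v) as a product of a vanishing factor and an unbounded one, and the rewriting in terms of $a$ and $b$ handles this.

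\textbf{Part (ii).} We have $\theta(r)=e^{-x(t)}-1\to 0$ as $r\to 0$, since $t\to\infty$ and $x(t)\to 0$. Moreover, since $dt/dr=-1/r$,
$$
r\theta'(r)=r\cdot\bigl(-x'(t)e^{-x(t)}\bigr)\cdot\Bigl(-\frac1r\Bigr)=x'(t)e^{-x(t)}\to 0.
$$
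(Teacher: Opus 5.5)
Your proof is correct and follows essentially the same route as the paper. You get $x\to 0$ from \eqref{S3E1} with $h(\xi(t))\to 0$, then $x'=-2/\zeta\to 0$ because $h'(\xi(t))\to 0$, which the paper proves with the same factorization $h'=\bigl(\frac{f'^2}{ff''}-f'F\bigr)\frac{ff''}{f'^2}\,f'F$ (your $-a(ab-1)$), and you obtain (ii) by the chain rule.
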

\begin{proof}
(i) By Lemma~\ref{S3L1}~(3), $|x(t)|\le 1$ for large $t>0$.
Thus,
$$
\lim_{t\to\infty}\xi(t)=\lim_{t\to\infty}\left(x(t)+2t+\log 16\right)=\infty.
$$
By \eqref{S3E1} we have
$$
4\left(e^{x(t)}-1\right)=h\left(x(t)+2t+\log 16\right)\to 0\quad\text{as}\ t\to\infty,
$$
which implies that $x(t)\to 0$ as $t\to\infty$.

By \eqref{S3L1E1} and \eqref{S3L1E2} we have
\begin{equation}\label{S3L3E1}
x'(t)=-\frac{2}{\zeta(t)},\quad \zeta(t)=1-\frac{4e^{x(t)}}{h'(\xi(t))}.
\end{equation}
Since $\eta(y)\to\infty$ as $y\to\infty$, it follows from $e^{-y}=F(\eta(y))$ and Lemma~\ref{S3L2}~(i) that
$$
\lim_{y\to\infty}h'(y)=\lim_{y\to\infty}\left(\frac{f'(\eta(y))^2}{f(\eta(y))f''(\eta(y))}-f'(\eta(y))F(\eta(y))\right)\frac{f(\eta(y))f''(\eta(y))}{f'(\eta(y))^2}f'(\eta(y))F(\eta(y))=0.
$$
Since $h'<0$, by \eqref{S3L3E1} it follows that $\zeta(t)\to\infty$ as $t\to\infty$.
Therefore, by \eqref{S3L3E1}, $x'(t)\to 0$ as $t\to\infty$.\\
(ii) By (i) we have
$$
\lim_{r\to 0}\theta(r)=\lim_{t\to\infty}\left(e^{-x(t)}-1\right)=0,\quad
\lim_{r\to 0}r\theta'(r)=\lim_{t\to\infty}x'(t)e^{-x(t)}=0.
$$
The proof is complete.
\end{proof}

Lemmas~\ref{S3L4-} and \ref{S3L4} are the main technical lemmas in this section.
\begin{lemma}\label{S3L4-}
Assume \eqref{g}. Then
\begin{equation}\label{S3L4-E0}
\frac{h''(y)}{h'(y)}\le 4\quad\text{for}\ y\ge -\log F(0).
\end{equation}
\end{lemma}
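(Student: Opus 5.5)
The plan is a direct computation of $h''/h'$ in terms of $u=\eta(y)$, followed by term-by-term bounds that use \eqref{g} and Lemma~\ref{S3L2}. Since $F(\eta(y))=e^{-y}$, we have $\eta'(y)=f(\eta)F(\eta)$. From \eqref{h} and the proof of Lemma~\ref{S3L2}~(v),
$$
h'(y)=-F(u)f(u)f''(u)\,\calF(u),\qquad \calF(u)=F(u)-\frac{f'(u)}{f(u)f''(u)},\qquad u=\eta(y).
$$
Every factor is positive by Lemma~\ref{S3L2}~(iii). So $h''/h'=\frac{d}{dy}\log|h'|=fF\,\frac{d}{du}\log\bigl(Fff''\calF\bigr)$. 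Using $\frac{d}{du}\log F=-\frac{1}{fF}$ and $f'/f=g'$, this gives
$$
\frac{h''}{h'}=-1+fF\,g'+fF\,\frac{f'''}{f''}+fF\,\frac{\calF'}{\calF}.
$$
In the proof of Lemma~\ref{S3L2}~(iii) we showed $\calF'<0$ and $\calF>0$, so the last term is negative and can be dropped.

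Next we need an upper bound on $fF$. We claim $f'F\le 1$, i.e.\ $h\ge 0$, which gives $fF\le f/f'=1/g'$. To see this, set $G=1/f'-F$. Then
$$
G'=-\frac{f''}{f'^2}+\frac1f=\frac{f'^2-ff''}{ff'^2}=-\frac{g''e^{2g}}{ff'^2}<0,
$$
and $G\to0$ as $u\to\infty$ because $F\to 0$ and $1/f'\to 0$. Hence $G>0$. It follows that $fF\,g'\le 1$.

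For the third term, write $s=g''/g'^2$. By \eqref{g}, $0<s\le 1$. We have $f''=(g''+g'^2)e^g$ and $f'''=(g'''+3g'g''+g'^3)e^g$. If $f'''\le 0$, the third term is nonpositive and there is nothing to prove. Otherwise we use $fF\le 1/g'$ together with $g'''<2g''^2/g'$ from \eqref{g}:
$$
fF\frac{f'''}{f''}\le \frac{2g''^2/g'+3g'g''+g'^3}{g'(g''+g'^2)}=\frac{2s^2+3s+1}{s+1}=2s+1\le 3.
$$
Combining the three estimates gives $h''/h'\le -1+1+3=3<4$ for all $y\ge-\log F(0)$, which proves \eqref{S3L4-E0}.

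The main obstacle is the global upper bound $f'F\le 1$. Only the asymptotic statement $f'F\to1$ comes for free from Lemma~\ref{S3L2}~(i), so the monotonicity argument for $1/f'-F$ is the key step; it uses $g''>0$. The other steps are bookkeeping: the sign of $f'''$ must be split off before multiplying by an upper bound for $fF$, and the condition $g'^2\ge g''$ is exactly what keeps $2s+1$ bounded.
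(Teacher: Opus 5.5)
Your proof is correct and follows essentially the same route as the paper. Both split $h''/h'$ into $f'F-1$, $fF\,f'''/f''$, and a negative term coming from $\calF'<0$ (Lemma~\ref{S3L2}~(ii)), split cases on the sign of $f'''$, and bound $ff'''/(f'f'')<1+2g''/g'^2\le 3$ together with $f'F\le 1$. The differences are only in presentation. You obtain the decomposition as the logarithmic derivative of $h'=-fFf''\calF$, which makes the sign of the dropped term immediate, and you prove $f'F\le 1$ from the monotonicity of $1/f'-F$ rather than by the paper's integration by parts.
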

\begin{proof}
By Lemma~\ref{S3L2}~(iii) we have
\begin{equation}\label{S3L4-E1-}
F(u)-\frac{f'(u)}{f(u)f''(u)}>0\quad\text{for}\ u\ge 0.
\end{equation}
Since
$$
F(u)=\int_u^{\infty}g'(s)e^{-g(s)}\frac{1}{g'(s)}\di s
=\frac{e^{-g(u)}}{g'(u)}-\int_u^{\infty}e^{-g(s)}\frac{g''(s)}{g'(s)^2}\di s,
$$
we have
\begin{equation}\label{S3L4-E1}
f'(u)F(u)=1-g'e^{g}\int_u^{\infty}e^{-g}\frac{g''}{g'^2}\di s\le 1.
\end{equation}
Since 
$$
f=e^g>0,\quad f'=g'e^g>0,\quad f''=(g'^2+g'')e^g>0,
$$
by Lemma~\ref{S3L2}~(ii) we have
\begin{equation}\label{S3L4-E2}
\frac{f'''}{f''^2}-\frac{2}{f'}+\frac{f'}{ff''}\le 0.
\end{equation}
Differentiating $h(y)=1-f'(\eta(y))F(\eta(y))$ twice with respect to $y$, we have
\begin{align*}
h'&=\left(-f''F+\frac{f'}{f}\right)\eta'=\left(-f''F+\frac{f'}{f}\right)fF,\\
h''&=\left(-ff'''F^2-f'f''F^2+3f''F-\frac{f'}{f}\right)\eta'
=\left(-ff'''F^2-f'f''F^2+3f''F-\frac{f'}{f}\right)fF,
\end{align*}
where we have used $\eta'(y)=f(\eta(y))F(\eta(y))$.
After straightforward but tedious calculations, by \eqref{S3L4-E1-}, \eqref{S3L4-E1}, and \eqref{S3L4-E2} we have
\begin{align*}
\frac{h''}{h'}
&=\frac{f'F}{F-\frac{f'}{ff''}}\left(\frac{ff'''}{f'f''}F-\frac{2}{f'}+\frac{f'}{ff''}\right)+f'F-1\\
&=f'F\frac{ff'''}{f'f''}+\frac{f''F}{F-\frac{f'}{ff''}}\left(\frac{f'''}{f''^2}-\frac{2}{f'}+\frac{f'}{ff''}\right)+f'F-1\\
&\le f'F\frac{ff'''}{f'f''}.
\end{align*}
If $f'''\le 0$, then the last term is negative, and hence \eqref{S3L4-E0} holds.
If $f'''>0$, by \eqref{g} we have
$$
\frac{ff'''}{f'f''}=4+\frac{g'''-3g'^3-g'g''}{g'(g'^2+g'')}
<4+\frac{2\frac{g''^2}{g'}-3g'^3-g'g''}{g'(g'^2+g'')}\\
=1+2\frac{g''}{g'^2}\le 3
< 4.
$$
It follows from the above inequality and \eqref{S3L4-E1} that \eqref{S3L4-E0} holds.
The proof is complete.
\end{proof}

\begin{lemma}\label{S3L4}
Assume \eqref{g}.
Then,
$$
v''+\frac{N-1}{r}v'+f(v)\ge 0\quad\text{for}\ 0<r\le R.
$$
\end{lemma}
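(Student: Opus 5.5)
The plan is to verify, via Proposition~\ref{S2S3P1} with $N=10$ and $q=1$, the differential inequality \eqref{S2S3P1E01} for the function $x(t)$ defined implicitly by \eqref{S3E1}, for all $t\ge t_0$. The proposition applies because \eqref{S2S3E3} holds for our $x$, by Lemma~\ref{S3L1}. The key observation is that the implicit relation \eqref{S3E1} says exactly $h(\xi(t))=1-f'(v)F(v)=4(e^{x}-1)$. Substituting this into \eqref{S2S3P1E01} and expanding $(x'+2)^2$, the constant terms cancel, and the inequality to prove becomes
\begin{equation*}
x''-8x'-4(e^x-1)\,x'(x'+4)\ge 0 .
\end{equation*}
From Lemma~\ref{S3L1} we already know $-2<x'<0$. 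Moreover, by \eqref{S3L4-E1} we have $h=1-f'F\ge 0$, and hence $e^x-1\ge 0$.

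Next I would compute $x''$ by differentiating the identity $4e^xx'=h'(\xi)(x'+2)$ from \eqref{S3L1E1} once more. This gives
\begin{equation*}
x''\bigl(4e^x-h'(\xi)\bigr)=h''(\xi)(x'+2)^2-4e^x x'^2,
\end{equation*}
where $4e^x-h'>0$ because $h'<0$. The main step, and the place where the delicate Lemma~\ref{S3L4-} enters, is to bound $h''(x'+2)^2$ from below. I would write it as
\begin{equation*}
h''(x'+2)^2=\frac{h''}{h'}\,h'(x'+2)(x'+2)=\frac{h''}{h'}\,4e^xx'(x'+2).
\end{equation*}
Since $4e^xx'(x'+2)<0$, the bound $h''/h'\le 4$ yields $h''(x'+2)^2\ge 16e^xx'(x'+2)$. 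Consequently
\begin{equation*}
x''(4e^x-h')\ge 4e^xx'(3x'+8).
\end{equation*}

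Finally, I would multiply the target inequality by $4e^x-h'>0$ and eliminate $h'$ using $h'=4e^xx'/(x'+2)$, so that $4e^x-h'=8e^x/(x'+2)$. After dividing by the positive quantity $-4e^xx'$ and multiplying by $x'+2>0$, it suffices to show
\begin{equation*}
-(3x'+8)(x'+2)+16+8(e^x-1)(x'+4)\ge 0 .
\end{equation*}
The first two terms combine to $-x'(3x'+14)$, which is positive for $x'\in(-2,0)$. The last term is nonnegative because $e^x\ge 1$ and $x'+4>0$. This proves \eqref{S2S3P1E01} for $t\ge t_0$, and Proposition~\ref{S2S3P1} then gives the claim for $0<r\le R$.

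I expect the only genuinely nontrivial input to be the estimate $h''/h'\le 4$, which is already supplied by Lemma~\ref{S3L4-}. The remaining work is careful sign bookkeeping, in particular tracking the signs of $x'$, $x'+2$, $h'$ and $e^x-1$ when multiplying and dividing through.
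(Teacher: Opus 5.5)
Your proof is correct and is essentially the paper's argument. Like the paper, you reduce to \eqref{S2S3P1E01}, substitute the implicit relation \eqref{S3E1}, differentiate it to obtain $x''$, and close with $h''/h'\le 4$ (Lemma~\ref{S3L4-}), $h\ge 0$ and $-2<x'<0$. The only difference is bookkeeping: the paper parametrizes by $\zeta=-2/x'$ and gets the exact expression $\frac{4}{\zeta^3}\{7\zeta-3+(\zeta-1)^2(4-\frac{h''}{h'})\}+\frac{4h}{\zeta^2}(2\zeta-1)$, and your final quantity $-x'(3x'+14)+8(e^x-1)(x'+4)$ is exactly $\zeta$ times this with the $(\zeta-1)^2(4-h''/h')$ term dropped.
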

\begin{proof}
By Proposition~\ref{S2S3P1} we prove that \eqref{S2S3P1E01} holds.
Let
$$
\zeta(t)=1-\frac{4e^{x(t)}}{h'(\xi(t))}
$$
as defined by \eqref{S3L1E2}.
Since $h'<0$ by Lemma~\ref{S3L2}~(v), we see that
\begin{equation}\label{S3L4E0}
\zeta\ge 1.
\end{equation}
By \eqref{S3L1E1} we have
\begin{equation}\label{S3L4E1}
x'=-\frac{2}{\zeta}.
\end{equation}
Differentiating $\zeta$ with respect to $t$, we have
\begin{multline}\label{S3L4E2}
\zeta'=-\frac{4e^xx'}{h'(\xi)}+\frac{4e^x\xi'h''(\xi)}{h'(\xi)^2}
=\frac{4e^x}{h'(\xi)}\frac{2}{\zeta}+\frac{4e^x}{h'(\xi)}(x'+2)\frac{h''(\xi)}{h'(\xi)}\\
=-\frac{2(\zeta-1)}{\zeta}-\frac{2(\zeta-1)^2}{\zeta}\frac{h''(\xi)}{h'(\xi)},
\end{multline}
where we have used $\frac{4e^x}{h'(\xi)}=1-\zeta$, $\xi'=x'+2$, and $x'=-\frac{2}{\zeta}$ by \eqref{S3L4E1}.
Differentiating \eqref{S3L4E1} with respect to $t$, by \eqref{S3L4E2} we have
\begin{equation}\label{S3L4E3}
x''=\frac{2\zeta'}{\zeta^2}=-\frac{4(\zeta-1)}{\zeta^3}-\frac{4(\zeta-1)^2}{\zeta^3}\frac{h''(\xi)}{h'(\xi)}.
\end{equation}
Substituting \eqref{h}, \eqref{S3E1}, \eqref{S3L4E1}, and \eqref{S3L4E3} into the LHS of \eqref{S2S3P1E01}, we have
\begin{align*}
&x''-8x'+16(e^x-1)-(1-f'(v)F(v))(x'+2)^2\\
&\qquad =x''-8x'+4h(\xi)-h(\xi)(x'+2)^2\\
&\qquad =\frac{4}{\zeta^3}\left\{7\zeta-3+(\zeta-1)^2\left(4-\frac{h''(\xi)}{h'(\xi)}\right)\right\}
+\frac{4h(\xi)}{\zeta^2}(2\zeta-1)\ge 0,
\end{align*}
where we have used $v=\eta(\xi(t))$, $\frac{h''}{h'}\le 4$ by Lemma~\ref{S3L4-}, and $\zeta\ge 1$ by \eqref{S3L4E0}.
Thus \eqref{S2S3P1E01} holds.
\end{proof}

\begin{lemma}\label{S3L5}
Assume \eqref{g}.
Then,
$$
f'(v(r))\le\frac{16}{r^2}\quad\text{for}\ 0<r\le R.
$$
\end{lemma}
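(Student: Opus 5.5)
The plan is to rewrite the stability inequality entirely in terms of the quantities $x(t)$, $\xi(t)$ and $h$ from the construction, and then reduce it to an elementary inequality in $e^{x}$.

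\emph{Step 1 (rewriting the bound).} Since $v=\eta(\xi(t))$ with $F(\eta(y))=e^{-y}$, we have $F(v(r))=e^{-\xi(t)}$. With $t=-\log r$,
$$
e^{\xi(t)}=16e^{2t}e^{x(t)}=\frac{16}{r^2}e^{x(t)},
\qquad\text{so}\qquad \frac{16}{r^2}=\frac{e^{-x(t)}}{F(v(r))}.
$$
Hence the claim $f'(v)\le 16/r^2$ is equivalent to
$$
f'(v)F(v)\le e^{-x(t)}\quad\text{for } t\ge t_0.
$$
By \eqref{h} and the implicit relation \eqref{S3E1}, the left-hand side equals $1-h(\xi(t))=1-4(e^{x(t)}-1)=5-4e^{x(t)}$.

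\emph{Step 2 (reduction to a quadratic).} Setting $s=e^{x(t)}>0$, the required inequality becomes $5-4s\le 1/s$. Multiplying by $s>0$, this is
$$
4s^2-5s+1=(4s-1)(s-1)\ge 0,
$$
which holds whenever $s\ge 1$. So it suffices to show $x(t)\ge 0$ for all $t\ge t_0$.

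\emph{Step 3 (sign of $x$).} By \eqref{S3E1}, $x(t)\ge 0$ is equivalent to $h(\xi(t))\ge 0$, that is, $f'(u)F(u)\le 1$ for $u=v(r)\ge 0$. This is exactly \eqref{S3L4-E1}, obtained in the proof of Lemma~\ref{S3L4-} by integrating by parts in $F$ and using $g''>0$. This gives $e^{x}\ge 1$ on $[t_0,\infty)$ and closes the argument.

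The only real work is the algebraic identification in Step 1 of $16/r^2$ with $e^{-x}/F(v)$, together with the substitution $f'(v)F(v)=5-4e^{x}$ coming from the implicit definition of $x$. I expect no serious obstacle: the implicit relation \eqref{S3E1} was evidently chosen so that this step becomes a one-line quadratic inequality.
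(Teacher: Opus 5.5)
Your proof is correct and is essentially the paper's argument: the paper writes $f'(v)=\frac{16}{r^2}(1-h(\xi))e^{x}=\frac{16}{r^2}\bigl(1-\frac34 h(\xi)-\frac14 h(\xi)^2\bigr)$ and uses $h\ge 0$, which is exactly your inequality $(5-4s)s\le 1$ with $s=e^{x}=1+h(\xi)/4$. The only difference is where $h\ge 0$ comes from. The paper cites Lemma~\ref{S3L2} ($h$ decreasing to $0$), while you cite \eqref{S3L4-E1}; both are valid, and yours also covers the endpoint $t=t_0$ directly.
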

\begin{proof}
Because of Lemma~\ref{S3L2}, we have
$$
h(y)\ge 0\quad\text{for}\ y>-\log F(0),
$$
and hence
\begin{equation}\label{S3L5E1}
h(\xi(t))\ge 0\quad\text{for}\ t>t_0.
\end{equation}
By \eqref{h} we have
\begin{equation}\label{S3L5E2}
f'(\eta(\xi(t)))F(\eta(\xi(t)))=1-h(\xi(t)).
\end{equation}
By \eqref{S3E1} we have
\begin{equation}\label{S3L5E3}
e^{x(t)}=1+\frac{1}{4}h(\xi(t)).
\end{equation}
Using \eqref{v}, \eqref{S3L5E2}, and \eqref{S3L5E3}, we have
\begin{multline*}
f'(v)=\frac{f'(\eta(\xi(t)))F(\eta(\xi(t)))}{F(\eta(\xi(t)))}
=\frac{16}{r^2}(1-h(\xi(t)))e^{x(t)}\\
=\frac{16}{r^2}\left(1-\frac{3}{4}h(\xi(t))-\frac{1}{4}h(\xi(t))^2\right)\le\frac{16}{r^2},
\end{multline*}
where we have used $h\ge 0$ by \eqref{S3L5E1}.
\end{proof}

\begin{corollary}\label{S3C7}
Let $f(u)=e^{g(u)}$.
Assume that $g$ satisfies \eqref{g}.
If $N\ge 10$, the bifurcation diagram of \eqref{S1E1} is of Type II.
Therefore, the radial singular solution, which uniquely exists, is stable.
\end{corollary}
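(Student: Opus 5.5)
The corollary splits into the case $N=10$, handled by Lemma~\ref{S2S2L1} with the function $v$ constructed in this section, and the case $N\ge 11$, handled by Proposition~\ref{S1P3}. Stability of the singular solution then follows from Proposition~\ref{S1P1}, or directly from Remark~\ref{R2}.

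\emph{Case $N=10$.} Take $v$ defined by \eqref{v}, with $x$ given by \eqref{S3E1}. By Lemma~\ref{S3L2}~(iv),(v), Lemma~\ref{S3L1} applies, so $v$ is defined on $(0,R]$ with $v(R)=0$, $v>0$ and $v$ of class $C^2$. Setting $\theta(r)=e^{-x(t)}-1$ puts $v$ in the form \eqref{S2S2L1E1} with $q=1$, and Lemma~\ref{S3L3} gives \eqref{S2S2L1E2}. Lemma~\ref{S3L4} gives \eqref{S2S2L1E3}, and Lemma~\ref{S3L5} gives \eqref{S2S2L1E4}, since $(N-2)^2/4=16$.

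It remains to check the hypotheses of Lemma~\ref{S2S2L1} on $f$. By Lemma~\ref{S3L2}~(i), $f'(u)F(u)\to 1$. Since $q_S=3$, we may pick $q_0=3/4$ and $q_1=2$ and find $M$ with $q_0\le f'F\le q_1$ for $u\ge M$. We also have $q=1<N/2$. We need $F(0)<\infty$, which holds because $f\ge e^{g(0)}$ grows superlinearly: $g'>0$ and $g''>0$ imply $g(u)\ge cu$ eventually, and $1/f$ is integrable. The only point needing care is smoothness. Lemma~\ref{S2S2L1} asks for $v$ smooth, but its proof only uses $C^2$ (Proposition~\ref{S2S2P1} needs $\hat v\in C^2$), and $x\in C^2$ from Lemma~\ref{S3L1} together with $F^{-1}\in C^2$ gives $v\in C^2$. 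Lemma~\ref{S2S2L1} then yields Type II.

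\emph{Case $N\ge 11$.} I would use Proposition~\ref{S1P3}. Lemma~\ref{S3L2}~(i) gives $q_f=1$, and $q_{JL}\ge 1$ for $N\ge 10$, so $q_f\le q_{JL}$. The pointwise ratio is
\[
\frac{f'^2}{ff''}=\frac{g'^2}{g'^2+g''}\in\Bigl[\tfrac12,1\Bigr],
\]
where the upper bound uses $g''>0$ and the lower bound uses $g''\le g'^2$. Taking $q_0=1/2$ and $q_1=1$, condition \eqref{S1P3E2} reads $2N-2\le (N-2)^2/4$, that is, $N^2-12N+12\ge 0$. This holds for $N\ge 11$: at $N=11$ the left side is $1$, and it is increasing in $N$ from there.

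If that inequality had failed, the fallback would be to rerun the construction of this section with $q=1$ for general $N$, checking that \eqref{S2S3P1E0} and the Hardy bound retain the slack that larger $N$ provides. In either case, Type II together with Proposition~\ref{S1P1} and $u^*\in H^1_0$ (Lemma~\ref{S2S1L1}) gives stability of the unique singular solution.

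The main obstacle is therefore not in this corollary itself but in the bookkeeping: the regularity of $v$, $F(0)<\infty$, and the arithmetic of \eqref{S1P3E2} for $N\ge 11$.
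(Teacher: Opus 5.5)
Your proposal is correct and matches the paper's proof: for $N=10$ you apply Lemma~\ref{S2S2L1} with the constructed $v$ and $q_0=3/4$, $q_1=2$, and for $N\ge 11$ you apply Proposition~\ref{S1P3}. Your $q_0=1/2$ in place of the paper's $7/16$ works just as well, since $N^2-12N+12\ge 0$ for $N\ge 11$. The extra checks ($F(0)<\infty$, $C^2$ regularity of $v$) are sound, and deducing stability from Type~II via Proposition~\ref{S1P1} with $u^*\in H^1_0(B)$ is slightly more careful than the paper's appeal to Remark~\ref{R2}, because it also covers the $N\ge 11$ case.
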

The case $N\ge 11$ is a special case of \cite{MN24}.
We reproduce the proof of \cite[Theorem 1.4]{MN24} for completeness.
\begin{proof}
We show that all the assumptions of Lemma~\ref{S2S2L1} are satisfied for $N=10$.
We take $q_0=\frac{3}{4}$ and $q_1=2$.
Then $\frac{1}{2}<q_0<1<q_1<q_S$.
Since $f'(u)F(u)\to 1$ by \eqref{S3L2E3}, we have $q_0\le f'(u)F(u)\le q_1$ for large $u>0$.
We define $v$ by \eqref{v}, and $\theta$ by \eqref{theta}.
It follows from Lemma~\ref{S3L3}~(ii) that \eqref{S2S2L1E2} holds.
The conditions \eqref{S2S2L1E3} and \eqref{S2S2L1E4} follow from Lemmas~\ref{S3L4} and \ref{S3L5}, respectively.
All assumptions of Proposition~\ref{S2S2P1} are now verified.
Therefore, Type II bifurcation follows from Lemma~\ref{S2S2L1} for $N=10$.

Next, we consider the case $N\ge 11$.
By \eqref{g} we have
$$
0\le \frac{g''(u)}{g'(u)^2}\le 1.
$$
Since $\frac{f'(u)^2}{f(u)f''(u)}=\frac{1}{1+\frac{g''(u)}{g'(u)^2}}$, we obtain
$$
\frac{7}{16}\le\frac{1}{2}\le\frac{f'(u)^2}{f(u)f''(u)}\le 1.
$$
Let $q_0=\frac{7}{16}$ and $q_1=1$.
By Proposition~\ref{S1P3} with \eqref{11db}, Type II bifurcation follows for $N\ge 11$.

By Remark~\ref{R2}, the singular solution is stable in both cases.
\end{proof}

\section{{\bf Examples}}
\begin{corollary}\label{S4C1}
Suppose that $\phi(u)\in C^3[0,\infty)$ satisfies \eqref{g} and that $\phi(0)\ge\log 2$.
Let
\begin{equation}\label{S4C1E1}
\begin{split}
&f(u)=\exp(g_n(u)),\\
&\qquad\qquad\text{where}\ g_1(u)=\phi(u)\ \text{and}\ g_n(u)=\exp(g_{n-1}(u))\ \text{for}\ n\ge 2.
\end{split}
\end{equation}
Then, the bifurcation diagram of \eqref{S1E1} is of Type II for $N\ge 10$.
\end{corollary}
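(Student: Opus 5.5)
The plan is to reduce everything to Corollary~\ref{S3C7}. We show by induction on $n$ that $g_n$ satisfies \eqref{g} on $[0,\infty)$; Corollary~\ref{S3C7} applied with $g=g_n$ then gives Type II for $N\ge 10$. The base case $n=1$ is the hypothesis on $\phi$. So it suffices to prove the following: if $g$ satisfies \eqref{g} and $g(0)\ge\log 2$, then $G=e^{g}$ satisfies \eqref{g} and $G(0)\ge\log 2$. The last claim is immediate, since $G$ is increasing and $G(0)=e^{g(0)}\ge 2>\log 2$. In fact $g_n(0)\ge 2$ for $n\ge 2$, which will be convenient.

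We compute $G'=g'e^g$, $G''=(g''+g'^2)e^g$ and $G'''=(g'''+3g'g''+g'^3)e^g$. Then $G'>0$ and $G''>0$ hold at once. For the third condition we have
\[
G'^2-G''=e^{g}\bigl(g'^2e^{g}-g'^2-g''\bigr)=e^g\bigl(g'^2(e^g-1)-g''\bigr).
\]
Using $g''\le g'^2$, this is at least $g'^2e^g(e^g-2)\cdot e^{-g}\cdot e^{g}\ge 0$, because $e^g\ge 2$, i.e.\ $g\ge\log 2$. This is exactly where the hypothesis $\phi(0)\ge\log 2$ (propagated by monotonicity) enters.

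The key step is the fourth condition, $2G''^2-G'G'''>0$. Dividing by $e^{2g}$, it becomes
\[
2(g''+g'^2)^2-g'(g'''+3g'g''+g'^3)=(2g''^2-g'g''')+g'^2g''+g'^4>0.
\]
Each term on the right is positive by \eqref{g}, so this expands cleanly and should hold without using the $\log 2$ hypothesis. After that, the induction closes, and regularity $g_n\in C^3$ is clear since compositions with $\exp$ preserve $C^3$.

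The only real obstacle is the third inequality $G'^2\ge G''$, which needs the lower bound on $g$. I will check carefully that $g_{n-1}\ge g_{n-1}(0)\ge\log 2$ on $[0,\infty)$, which follows from $g'>0$ in the inductive hypothesis. If the fourth inequality were to fail in some degenerate way, I would fall back on writing it as $2G''^2-G'G'''=e^{2g}[\dots]$ and grouping terms as above, but the expansion indicates this is routine.
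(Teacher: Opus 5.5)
Your proposal is correct and essentially matches the paper's proof. The paper also argues by induction on $n$ that each $g_n$ satisfies \eqref{g}, using the identity $2G''^2-G'G'''=\{(2g''^2-g'g''')+g'^2g''+g'^4\}e^{2g}$ and the bound $G'^2-G''\ge g'^2e^{g}(e^{g}-2)\ge 0$, which follows from $g''\le g'^2$ and $e^{g}\ge e^{g(0)}\ge 2$; Corollary~\ref{S3C7} then gives Type II (the factor $e^{-g}\cdot e^{g}$ in your lower bound is superfluous but harmless).
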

\begin{proof}
We by induction see that $g_n(0)>\log 2$ for all $n\ge 1$, because $g_n(0)=\exp(g_{n-1}(0))\ge 2>\log 2$ for $n\ge 2$.
Next, we prove by induction that \eqref{g} holds for all $n\ge 1$.
Since $g_1=\phi$, \eqref{g} holds for $n=1$.
Assume that \eqref{g} holds for $n=k-1$.
Since
\begin{align*}
g'_k&=g'_{k-1}e^{g_{k-1}}>0,\\
g''_k&=(g'^2_{k-1}+g''_{k-1})e^{g_{k-1}}>0,\\
2g''^2_k-g'_kg'''_k&=\left\{g'^4_{k-1}+g'^2_{k-1}g''_{k-1}
+\left(2g''^2_{k-1}-g'_{k-1}g'''_{k-1}\right)\right\}e^{2g_{k-1}}>0,
\end{align*}
the three inequalities hold for $n=k$.
Using $-g''_{k-1}\ge -g'^2_{k-1}$, we have
$$
g'^2_k-g''_k=\left(g'^2_{k-1}e^{g_{k-1}}-g'^2_{k-1}-g''_{k-1}\right)e^{g_{k-1}}
\ge\left(g'^2_{k-1}e^{g_{k-1}(0)}-2g'^2_{k-1}\right)e^{g_{k-1}}
\ge 0,
$$
where we used $e^{g_{k-1}(0)}\ge 2$.
Thus, all inequalities in \eqref{g} hold for $n=k$.
By induction, \eqref{g} holds for all $n\ge 1$.
\end{proof}

\begin{proof}[Proof of Theorem~\ref{T1}]
Let $g(u)=(u+1)^p$ with $p>1$.
We verify that $g$ satisfies \eqref{g}.
A direct computation yields
\begin{align*}
g' &=p(u+1)^{p-1}>0,\\
g''&=p(p-1)(u+1)^{p-2}>0,\\
g'^2-g''&=p(u+1)^{p-2}\left\{p(u+1)^p-p+1\right\}\ge 0,\\
2g''^2-g'g'''&=p^3(p-1)(u+1)^{2p-4}>0.
\end{align*}
Thus, the conclusion of Corollary~\ref{S3C7} holds for $f(u)=\exp((u+1)^p)$ with $p>1$.

Next, let $\phi(u)=\exp(u)$ and let $f$ and $g_n$ be defined by \eqref{S4C1E1}.
We verify that $\phi$ satisfies all conditions of Corollary~\ref{S4C1}.
We have $\phi(0)=1>\log 2$.
By direct calculation, $\phi$ satisfies \eqref{g}.
Thus, the conclusion of Corollary~\ref{S3C7} holds for $f(u)=\exp(\cdots\exp(u)\cdots)$ ($n\ge 2$ times).
\end{proof}

\begin{example}\label{S4EX2}
Let $\phi(u)=(u+1)^p$ with $p>1$.
It was already shown in the proof of Theorem~\ref{T1} that $\phi$ satisfies \eqref{g}.
We have $\phi(0)=1\ge\log 2$.
Therefore, the conclusion of Corollary~\ref{S4C1} holds for $f(u)=\exp(g_n(u))$ ($n\ge 1$) defined by \eqref{S4C1E1}.
\end{example}

\begin{example}\label{S4EX3}
The following are examples of $\phi$ of Corollary~\ref{S4C1}:
$$
1+(u+1)\log(u+1),\qquad (u+e)^p\log(u+e)\ (p\ge 1).
$$
Therefore, the conclusion of Corollary~\ref{S4C1} holds for $f(u)=\exp(g_n(u))$ ($n\ge 1$) defined by \eqref{S4C1E1}.
The verification is straightforward and therefore omitted.
\end{example}

\section{{\bf Grow-up phenomena}}
\begin{proof}[Proof of Theorem~\ref{T2}]
It is known from \cite{BCMR96} that there exists a unique global-in-time solution $u$ of \eqref{PP} with $\lambda=\lambda^*$ and $u_0=0$.
If the singular solution $(\lambda^*,u^*)$ is unstable, then there exists a minimal solution $\underline{u}\ge 0$ with $\lambda=\lambda^*$. By the comparison principle, we have
\[
0 \le u \le \underline{u} \quad \text{in } B \times (0,\infty),
\]
which implies that the grow-up phenomenon does not occur.

It suffices to prove that grow-up occurs when the singular solution $(\lambda^*,u^*)$ is stable.
In this case, the bifurcation diagram is of Type II by Proposition~\ref{S1P1}. Moreover, by the comparison principle, we obtain
\[
0 \le u \le u^* \quad \text{in } B \times (0,\infty).
\]
Multiplying \eqref{PP} by $u_t$ and integrating over $B\times (\tau,t)$ with $0<\tau<t$, we obtain
\begin{equation}\label{T2PE1}
\int_{\tau}^{t}\int_{B}u_s^{2}\,\di x\di s
+ \frac{1}{2}\int_{B}\left(|\nabla u(x,t)|^2 - |\nabla u(x,\tau)|^2\right)\,\di x
= \lambda^{*}\int_{B}\bigl(\tilde{F}(u(x,t))-\tilde{F}(u(x,\tau))\bigr)\,\di x,
\end{equation}
where 
$$
\tilde{F}(u):=\int_{0}^{u}f(s)\,ds\le f(u)u.
$$
Since $0 \le u \le u^*$ and $f$ is increasing, we have
\begin{equation}\label{T2PE2}
\int_B f(u(x,t))u(x,t)\di x \le \int_B f(u^*)u^*\di x \le C_1.
\end{equation}
Combining \eqref{T2PE1} and \eqref{T2PE2}, we obtain
$$
\|u_t\|_{L^2(B\times (\tau,\infty))} + \sup_{t>\tau}\|\nabla u(\cdot,t)\|_{L^2(B)} \le C_2.
$$

We next show that $u(\,\cdot\,,t) \to u^*$ in $L^2(B)$ as $t\to\infty$.
Suppose the contrary.
Then there exist a sequence $t_n \to \infty$ and $\e>0$ such that
\begin{equation}\label{T2PE4}
\|u(\,\cdot\,,t_n)-u^*\|_{L^2(B)} > \varepsilon.
\end{equation}
Since $u_t \in L^2(B\times (\tau,\infty))$, by passing to a subsequence of $t_n$ if necessary, we may assume that
\[
\int_{t_n}^{t_n+1}\int_B |u_t|^2 \,\di x\di s \to 0.
\]
Hence, we can choose $\tau_n \in (t_n,t_n+1)$ such that
\[
\|u_t(\,\cdot\,,\tau_n)\|_{L^2(B)} \to 0.
\]
Moreover,
\begin{equation}\label{T2PE5}
\|u(\,\cdot\,,\tau_n)-u(\,\cdot\,,t_n)\|_{L^2(B)} \to 0.
\end{equation}
By \eqref{T2PE4} and \eqref{T2PE5} we obtain
\begin{equation}\label{T2PE6}
\|u(\,\cdot\,,\tau_n)-u^*\|_{L^2(B)} > \frac{\varepsilon}{2}
\end{equation}
for sufficiently large $n$.
By the uniform bound in $H^1_0(B)$, we may assume that
\[
u(\,\cdot\,,\tau_n) \rightharpoonup u_\infty \quad \text{weakly in } H^1_0(B),
\]
and strongly in $L^2(B)$.
Passing to the limit in the weak formulation of \eqref{PP} and using that $\left\|u_t(\,\cdot\,,\tau_n)\right\|_{L^2(B)} \to 0$,
we see that $u_\infty$ is a stationary solution of \eqref{S1E1}. Since the bifurcation diagram is of Type II, it follows that $u_\infty = u^*$, which contradicts \eqref{T2PE6}.
Thus, $u(\,\cdot\,,t)\to u^*$ in $L^2(B)$ as $t\to\infty$.
\end{proof}

\bigskip
\noindent{\bf Conflict of interest}\\
The authors have no relevant financial or non-financial interests to disclose.\\

\noindent{\bf Data Availability}\\
Data sharing is not applicable to this article as no new data were created or analyzed in this study.

\end{document}